
\documentclass[11pt,reqno]{amsart}

\usepackage{amsmath,amssymb,amsfonts,bm}
\usepackage{stmaryrd}
\usepackage{bbold}
\usepackage{times}
\usepackage[all]{xy}
\usepackage{mathrsfs}
\usepackage{graphicx}

\usepackage{amsthm}

\textwidth=34pc

\oddsidemargin=30pt
\evensidemargin=30pt


\theoremstyle{plain}
\newtheorem{thm}{Theorem}
\newtheorem{lem}[thm]{Lemma}
\newtheorem{cor}[thm]{Corollary}
\newtheorem{prop}[thm]{Proposition}

\theoremstyle{definition}

\newtheorem{example}[thm]{Example}

\theoremstyle{remark}


\renewcommand{\Im}{\operatorname{Im}}
\newcommand{\Hom}{\operatorname{Hom}}
\newcommand{\End}{\operatorname{End}}
\newcommand{\Aut}{\operatorname{Aut}}

\newcommand{\ann}{\operatorname{ann}}

\newcommand{\sS}{\mathcal{S}}

\DeclareMathOperator{\ad}{ad}
\DeclareMathOperator{\reg}{reg}

\newcommand{\edim}{\operatorname{Dim}_\e}

\newcommand{\Spec}{\operatorname{Spec}}

\newcommand{\Pic}{\operatorname{Pic}}

\newcommand{\Id}{\operatorname{Id}}
\newcommand{\rank}{\operatorname{rank}}

\newcommand{\tr}{\operatorname{Tr}}
\newcommand{\trace}{\operatorname{trace}}

\newcommand{\ch}{\operatorname{char}}

\newcommand{\into}{\hookrightarrow}
\newcommand{\tto}{\longrightarrow}
\newcommand{\iso}{\stackrel{\sim}{\tto}}

\renewcommand{\bar}[1]{\overline{#1}}

\newcommand{\Mat}{\operatorname{M}}

\DeclareMathOperator{\Irr}{Irr}

\renewcommand{\phi}{\varphi}

\newcommand{\cO}{\mathcal{O}}

\newcommand{\bl}{\boldsymbol{l}}
\newcommand{\br}{\boldsymbol{r}}
\newcommand{\bb}{\boldsymbol{b}}

\newcommand{\fp}{\mathfrak{p}}

\newcommand{\e}{\varepsilon}

\renewcommand{\k}{\mathbb{k}}
\newcommand{\FF}{\mathbb{F}}

\newcommand{\ZZ}{\mathbb{Z}}
\newcommand{\QQ}{\mathbb{Q}}
\newcommand{\RR}{\mathbb{R}}

\newcommand{\cen}{\mathcal{Z}}
\newcommand{\bil}{\operatorname{Bil}}

\newcommand{\Hint}{\textstyle\int}
\newcommand{\Cocom}{\operatorname{C}}

\newdir{ >}{{}*!/-5pt/\dir{>}}

\renewcommand{\labelenumi}{(\alph{enumi})}


\begin{document}

\title[Frobenius algebras]%
{Some applications of Frobenius algebras to Hopf algebras}

\author{Martin Lorenz}

\address{Department of Mathematics, Temple University,
    Philadelphia, PA 19122}

\email{lorenz@temple.edu}

\urladdr{www.math.temple.edu/~lorenz}


\thanks{Research of the author supported in part by NSA Grant H98230-09-1-0026}

\subjclass[2000]{16L60, 16W30, 20C15}

\keywords{%
Frobenius algebra, symmetric algebra, group algebra, Hopf algebra, separable algebra, character,
Grothendieck ring, integrality, irreducible representation, adjoint representation, rank 
}

\begin{abstract}
This expository article presents a unified ring theoretic approach, based on the theory of Frobenius algebras, 
to a variety of results on Hopf algebras. These include a theorem of S. Zhu on the degrees
of irreducible representations, the so-called class equation, the
determination of the semisimplicity locus of the Grothendieck ring, the spectrum of the
adjoint class and a non-vanishing result
for the adjoint character.
\end{abstract}

\maketitle



\section*{Introduction}

\subsection{} 
It is a well-known fact that all finite-dimensional Hopf algebras over a field are Frobenius 
algebras. 
More generally, working over a commutative base ring $R$ with trivial
Picard group, any Hopf $R$-algebra that
is finitely generated projective over $R$ is a Frobenius $R$-algebra \cite{bP71}.
This article explores the Frobenius property, and some consequences thereof, for Hopf algebras and for 
certain algebras that 
are closely related to Hopf algebras without generally being Hopf algebras themselves: the Grothendieck ring
$G_0(H)$ of a split semisimple Hopf algebra $H$ and the representation algebra $R(H) \subseteq H^*$.
Our principal goal is to quickly derive various consequences from the fact that the latter algebras are 
Frobenius, or even symmetric, thereby giving a unified ring theoretic approach to a variety of known results
on Hopf algebras. 

\subsection{}
The first part of this article, consisting of four sections, is entirely devoted to Frobenius and symmetric algebras
over commutative rings; its sole purpose is to 
deploy the requisite ring theoretical tools. 
The content of these sections is classical over fields and the case of general commutative base rings
is easily derived along the same lines. In the interest of readability,  we have opted 
for a self-contained development.
The technical core of this part are the construction of certain central idempotents in
Propositions~\ref{P:idempotents} and \ref{P:idempotents2}
and the description of the separability locus of
a Frobenius algebra in Proposition~\ref{P:separability}. The essence of the latter
proposition goes back to D.~G.~Higman \cite{dH55}. 

\subsection{}
Applications to Hopf algebras are given in Part 2. We start by considering 
Hopf algebras that are finitely generated projective over a commutative ring.
After reviewing some standard facts,
due to Larson-Sweedler \cite{rLmS69}, Pareigis \cite{bP71}, and
Oberst-Schneider  \cite{uOhS73}, 
concerning the Frobenius property of Hopf algebras, we
spell out the content of the aforementioned Propositions~\ref{P:idempotents} and \ref{P:idempotents2}
in this context (Proposition~\ref{P:Hopf7}).
A generalization, due to Rumynin \cite{dRxx}, of Frobenius' classical theorem
on the degrees of irreducible complex representations of finite groups
follows in a few lines from this result (Corollary~\ref{C:HopfIrreducible}). 

The second section of Part 2 focuses on semisimple Hopf algebras
$H$ over fields, specifically their Grothendieck rings $G_0(H)$.
As an application of Proposition~\ref{P:Hopf7}, we derive a result of S. Zhu \cite{sZ93}
on the degrees of certain irreducible representations of $H$ (Theorem~\ref{T:Zhu}).
The celebrated class equation for semisimple Hopf algebras is
presented as an application of Proposition~\ref{P:idempotents} in Theorem~\ref{T:classeqn}. 
The proof given here follows the outline of our earlier proof in \cite{mL98}, with
a clearer separation of the purely ring theoretical underpinnings. Other applications
concern a new proof of an integrality result, originally due to Sommerh\"auser \cite{yS98},
for the eigenvalues of the ``adjoint class'' (Proposition~\ref{P:adjoint}), the determination of the
semisimplicity locus of $G_0(H)$ (Proposition~\ref{P:semisimple}), and a non-vanishing
result for the adjoint character (Proposition~\ref{P:group-like}).

For the sake of simplicity, we have limited ourselves for the most part to base fields of 
characteristic $0$. In some cases, this restriction can be removed with the aid of
$p$-modular systems. For example, as has been observed by Etingof and Gelaki \cite{pEsG98},
any bi-semisimple Hopf algebra over an algebraically closed field of positive characteristic,
along with all its irreducible representations,
can be lifted to characteristic $0$. We plan to address this aspect more fully in a sequel
to this article.

In closing I wish to thank the referee for a careful reading of this article and for 
valuable comments.


\part{RING THEORY} \label{1}


Throughout this first part of the paper, $R$ will denote a commutative ring and $A$ will
be an $R$-algebra that is finitely generated projective over $R$.


\section{Preliminaries on Frobenius and symmetric algebras} \label{S:preliminaries}



\subsection{Definitions} \label{SS:definition}

\subsubsection{Frobenius algebras} \label{SSS:basic}

Put $A^\vee = \Hom_R(A,R)$; this is an $(A,A)$-bimo\-dule via
\begin{equation} \label{E:VeeAction}
(afb)(x) = f(bxa) \qquad (a,b,x \in A, f \in A^\vee)\ .
\end{equation}
The algebra $A$ is called   \emph{Frobenius} 
if $A \cong A^\vee$ as left $A$-modules. 
This is equivalent to $A \cong A^\vee$ as right $A$-modules. Indeed, 
using the standard isomorphism
$A_A \iso (A^{\vee\vee})_A = \Hom_R({}_AA^\vee,R)_A$ given by  $a \mapsto (f \mapsto f(a))$, 
one deduces from ${}_AA^\vee \cong {}_AA$ that 
\begin{equation} \label{E:chain}
A_A \cong \Hom_R({}_AA^\vee,R)_A
\cong \Hom_R({}_AA,R)_A = {A^\vee\!}_A \ .
\end{equation}
The converse is analogous. 

More precisely, any isomorphism
$L \colon {}_AA \iso {}_AA^\vee$ has the form
\begin{equation*}
L(a) = a\lambda \qquad (a \in A)\ ,
\end{equation*}
where we have put $\lambda = L(1) \in A^\vee$. The linear form $\lambda$ is called a
\emph{Frobenius homomorphism}. Tracing $1 \in A$ through \eqref{E:chain} 
we obtain $1 \mapsto (a\lambda \mapsto (a\lambda)(1) = \lambda(a))
\mapsto (a \mapsto \lambda(a)) = \lambda$. Hence, the resulting isomorphism
$R \colon A_A \iso {A^\vee\!}_A$ is explicitly given by
\begin{equation*}
R(a) = \lambda a \qquad (a \in A)\ .
\end{equation*}
In particular, $\lambda$ is also a free generator of $A^\vee$ as right $A$-module.
The automorphism $\alpha \in \Aut_\text{$R$-alg}(A)$ that is given by 
$\lambda a = \alpha(a)\lambda$ for $a \in A$ is called the \emph{Nakayama 
automorphism}.

\subsubsection{Symmetric algebras}

If  $A \cong A^\vee$ as $(A,A)$-bimodules then the algebra $A$ is called 
\emph{symmetric}. 
Any $(A,A)$-bimodule
isomorphism $A \cong A^\vee$ restricts to an isomorphism
of Hochschild cohomology modules $H^0(A,A) \cong H^0(A,A^\vee)$.
Here, $H^0(A,A) = \cen(A)$ is the center of $A$, and
$H^0(A,A^\vee)= A^{\vee}_{\trace}$ consists of all
\emph{trace forms} on $A$, that is, $R$-linear forms $f \in A^\vee$
vanishing on all Lie commutators $[x,y] = xy-yx$ for $x,y \in A$. Thus, if $A$ is symmetric then we 
obtain an isomorphism of $\cen(A)$-modules
\begin{equation} \label{E:center}
\cen(A) \iso A^{\vee}_{\trace} \ .
\end{equation}

\subsection{Bilinear forms} \label{SS:nonsingular}

\subsubsection{Nonsingularity} 
Let $\bil(A;R)$ denote the $R$-module consisting of all 
$R$-bilinear forms $\beta \colon A \times A \to R$. 
Putting $\br_\beta(a) =  \beta(a,\,.\,)$ for $a \in A$,
we obtain an isomorphism
of $R$-modules
\begin{equation} \label{E:Frob1}
\br \colon \bil(A;R) \iso \Hom_R(A,A^\vee)\ ,\quad \beta \mapsto \br_\beta \ .
\end{equation}
The bilinear form $\beta$ is called left nonsingular if $\br_\beta$ is an 
isomorphism. Inasmuch as $A$ and $A^\vee$ are locally isomorphic projectives
over $R$, it suffices to assume that
$\br_\beta$ is surjective; see \cite[Cor. 4.4(a)]{dE95}.

Similarly, there is an isomorphism
\begin{equation} \label{E:Frob2}
\bl \colon \bil(A;R) \iso \Hom_R(A,A^\vee)\ ,\quad \beta \mapsto \bl_\beta
\end{equation}
with $\bl_\beta(a) = \beta(\,.\,,a)$,
and $\beta$ is called right nonsingular if $\bl_\beta$ is an 
isomorphism. Right and left nonsingularity are in fact equivalent. 
After localization, this follows from \cite[Prop. XIII.6.1]{sL02}. 
In the following, we will therefore call
such forms simply \emph{nonsingular}.

\subsubsection{Dual bases} \label{SSS:dual}
Fix a nonsingular bilinear form $\beta \colon A \times A \to R$. Since $A$ is finitely generated projective over 
$R$, we have a canonical isomorphism $\End_R(A) \cong A \otimes_R A^\vee$; see \cite[II.4.2]{nB70}.
Thus, the isomorphism 
$\bl_\beta$ in \eqref{E:Frob2} yields an isomorphism $\End_R(A) \iso A \otimes _R A$.
Writing the image of $\Id_A \in \End_R(A)$ as $\sum_{i=1}^n x_i \otimes y_i \in A \otimes_R A$, we have
\begin{equation} \label{E:Frob3}
a = \sum_i \beta(a,y_i) x_i \quad\text{for all $a \in A$}.
\end{equation}
Conversely, assume that $\beta \colon A \times A \to R$ is such that there are elements 
$\{ x_i \}_1^n,  \{ y_i \}_1^n \subseteq A$ satisfying \eqref{E:Frob3}. Then
any $f \in A^\vee$ satisfies 
\begin{equation} \label{E:Frob4}
f = \sum_i \bl_\beta(y_i)f(x_i) = \bl_\beta(\sum_i  f(x_i) y_i)\ .
\end{equation}
This shows that $\bl_\beta \colon A \to A^\vee$ is
surjective, and hence $\beta$ is nonsingular.  To summarize, a
bilinear form $\beta \in \bil(A;R)$ is nonsingular if and only if there exist ``dual bases''
$\{ x_i \}_1^n,  \{ y_i \}_1^n \subseteq A$ satisfying \eqref{E:Frob3}. 

Note also that, for a nonsingular $R$-bilinear form $\beta \colon A \times A \to R$ and 
elements $\{ x_i \}_1^n,  \{ y_i \}_1^n \subseteq A$, condition \eqref{E:Frob3} is equivalent to
\begin{equation} \label{E:Frob3'}
b = \sum_i \beta(x_i,b) y_i \quad\text{for all $b \in A$}.
\end{equation}
Indeed, both \eqref{E:Frob3} and \eqref{E:Frob3'} are equivalent to
$\beta(a,b) = \sum_i \beta(a,y_i) \beta(x_i,b)$ for all $a,b \in A$.

\subsubsection{Associative bilinear forms} 
An $R$-bilinear form $\beta  \colon A \times A \to R$ is called \emph{associative} if  $\beta(xy,z) = \beta(x,yz)$  
for all $x,y,z \in A$. Let $\bil_\text{assoc}(A;R)$ denote the $R$-submodule 
of $\bil(A;R)$ consisting of 
all such forms. Under the isomorphism  \eqref{E:Frob1}, $\bil_\text{assoc}(A;R)$
corresponds to the submodule $\Hom(A_A,{A^\vee}_A) \subseteq \Hom_R(A,A^\vee)$.
Similarly, \eqref{E:Frob2} yields an isomorphism of $R$-modules 
$\bil_\text{assoc}(A;R) \cong\Hom({}_AA,{}_AA^\vee)$. Therefore:
\begin{quote}
The algebra $A$ is Frobenius if and only if there exists a nonsingular associative 
$R$-bilinear form $\beta \colon A \times A \to R$. 
\end{quote}

\subsubsection{Symmetric forms} \label{SSS:symmetric}
The form $\beta$ is called \emph{symmetric} if $\beta(x,y) = \beta(y,x)$ for all
$x,y \in A$. The isomorphisms $\br$ and $\bl$ in  \eqref{E:Frob1}, \eqref{E:Frob2} agree
on the submodule consisting of all symmetric bilinear forms, and they yield an isomorphism
between the $R$-module consisting of all associative symmetric bilinear forms on $A$
and the submodule $\Hom({}_AA_A,{}_A{A^\vee}_A) \subseteq \Hom_R(A,A^\vee)$ consisting of all 
$(A,A)$-bimodule maps $A \to A^\vee$. Thus:
\begin{quote}
The algebra $A$ is symmetric if and only if there exists a nonsingular associative symmetric 
$R$-bilinear form $\beta \colon A \times A \to R$. 
\end{quote}

\subsubsection{Change of bilinear form}  \label{SSS:change}
Given two nonsingular forms $\beta, \gamma \in \bil_\text{assoc}(A;R)$, 
we obtain an isomorphism of left $A$-modules
$\bl_\beta^{-1} \circ \bl_{\gamma} \colon {}_AA \iso {}_AA$. Since this
isomorphism has the form $a \mapsto
au$ $(a \in A)$ for some unit $u \in A$, we see that
\[
\gamma(\,.\,,\,.\,) = \beta(\,.\,,\,.\,u) \ .
\]
If $\beta$ and $\gamma$ are both
symmetric then $\bl_\beta^{-1} \circ \bl_{\gamma}$
is an isomorphism of $(A,A)$-bimodules, and hence $u \in \cen(A)$, the center of $A$.


\section{Characters} \label{S:characters}

Throughout this section, 
$M$ will denote a left $A$-module that is assumed to be
finitely generated projective over $R$.
For $a \in A$, we let $a_M  \in \End_{R}(M)$ denote the endomorphism 
given by the action of $a$ on $M$: 
\[
a_M(m) = am \qquad (a \in A, m \in M)\ .
\]


\subsection{Trace and rank} \label{SS:trace}

The trace map 
\[
\tr \colon \End_{R}(M) \cong M \otimes_R M^\vee
\to R \ ;
\]
it is defined via evaluation of $M^\vee = \Hom_R(M,R)$ on $M$; see \cite[II.4.3]{nB70}.  
The image of the trace map complements the annihilator $\ann_RM = \{ r\in R \mid 
rm=0 \ \forall m \in M\}$:
\begin{equation} \label{E:ImTr}
\Im(\tr) \oplus \ann_RM = R \,;
\end{equation}
see \cite[Proposition I.1.9]{DeMI}.
The \emph{Hattori-Stallings rank} of $M$ is
defined by
\[
\rank_R M = \tr(1_M) \in R\ .
\]
If $M$ is free of rank $n$ over $R$ then $\rank_R M = n \cdot 1$. The following 
lemma is standard and easy.

\begin{lem} \label{L:trace}
The $R$-algebra $\End_{R}(M)$ is symmetric, with nonsingular associative symmetric 
$R$-bilinear form $\End_{R}(M)  \times \End_{R}(M) \to R$\,, $(x,y) \mapsto \tr(xy)$\,.
Identifying $\End_{R}(M)$ with $M \otimes_R M^\vee$ and writing $1_M = \sum m_i \otimes f_i$\,,
dual bases for this form are given by 
\begin{equation*} 
\{x_{i,j} = m_i \otimes f_j \}\,, \{ y_{i,j} = m_j \otimes f_i \} \ .
\end{equation*}
\end{lem}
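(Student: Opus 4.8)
The plan is to exhibit $\beta(x,y) := \tr(xy)$ as a nonsingular associative symmetric $R$-bilinear form on $\End_R(M)$ and then quote the characterization of symmetric algebras in \S\ref{SSS:symmetric}; the stated dual bases will take care of nonsingularity by way of \S\ref{SSS:dual}. Associativity is essentially free: multiplication in $\End_R(M)$ is composition and hence associative, so $\beta(xy,z) = \tr((xy)z) = \tr(x(yz)) = \beta(x,yz)$.

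For the remaining properties I would first record how the identification $\End_R(M) \cong M \otimes_R M^\vee$ interacts with composition and trace. A simple tensor $m \otimes f$ acts on $M$ by $m' \mapsto f(m')\,m$, from which one reads off the product rule
\[
(m \otimes f)(m' \otimes g) = f(m')\,(m \otimes g),
\]
while the trace is evaluation, $\tr(m \otimes f) = f(m)$. Symmetry is then immediate on simple tensors, since $\tr\bigl((m\otimes f)(m'\otimes g)\bigr) = f(m')\,g(m) = \tr\bigl((m'\otimes g)(m\otimes f)\bigr)$, and it extends to all of $\End_R(M)$ by $R$-bilinearity.

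To prove nonsingularity I would verify the dual basis identity \eqref{E:Frob3} for the proposed $x_{i,j}, y_{i,j}$. The equation $1_M = \sum_i m_i \otimes f_i$ says exactly that $\sum_i f_i(m)\,m_i = m$ for every $m \in M$. Given $a \in \End_R(M)$, the product rule gives $a\cdot y_{i,j} = a(m_j) \otimes f_i$, so $\beta(a,y_{i,j}) = f_i\bigl(a(m_j)\bigr)$. Evaluating $\sum_{i,j} \beta(a,y_{i,j})\,x_{i,j}$ at an arbitrary $m' \in M$ and collapsing first the sum over $j$ and then the sum over $i$ by means of this identity relation returns $a(m')$. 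Hence \eqref{E:Frob3} holds, $\beta$ is nonsingular by \S\ref{SSS:dual}, and the criterion of \S\ref{SSS:symmetric} completes the proof that $\End_R(M)$ is symmetric.

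The one point requiring care is the bookkeeping of conventions: one must fix how $M \otimes_R M^\vee$ is identified with $\End_R(M)$ and how composition translates into the product on simple tensors. The transposition of indices in $y_{i,j} = m_j \otimes f_i$ relative to $x_{i,j} = m_i \otimes f_j$ is precisely what lets the double sum telescope via $\sum_i f_i(m)\,m_i = m$; apart from this, the argument is a routine unwinding of the dual basis relation encoded by $1_M = \sum_i m_i \otimes f_i$.
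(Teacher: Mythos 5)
Your proof is correct, and since the paper omits the argument entirely (declaring the lemma ``standard and easy''), your routine verification --- the product rule $(m\otimes f)(m'\otimes g)=f(m')(m\otimes g)$, trace as evaluation, and checking \eqref{E:Frob3} for the transposed-index dual bases via $\sum_i f_i(m)m_i=m$ --- is precisely the intended argument. The appeals to \S\ref{SSS:dual} for nonsingularity and \S\ref{SSS:symmetric} for the symmetric-algebra criterion are the right ones, and the double-sum collapse to $a(m')$ checks out.
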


\subsection{The character of $M$} \label{SS:character}

The \emph{character} of $M$ is the trace form $\chi_M \in A^\vee_{\trace}$ that is defined by
\[
\chi_M(a) =  \tr(a_M)  \qquad (a \in A)\ .
\]
If $e=e^2 \in A$ is an idempotent then 
$e_M = 1_{eM} \oplus 0_{(1-e)M}$, and so 
\begin{equation} \label{E:idempotentrank}
\chi_M(e) = \rank_R eM \ .
\end{equation}

Now assume that $A$ is Frobenius with associative nonsingular bilinear form $\beta$,
and let $\{ x_i \}_1^n,  \{ y_i \}_1^n \subseteq A$ be dual bases for $\beta$ as in 
\eqref{E:Frob3}.
Then the preimage of $\chi_M \in A^\vee_{\trace} \subseteq A^\vee$ under the isomorphism 
$\bl_\beta \colon {}_AA \iso {}_AA^\vee$ in \eqref{E:Frob2}
is the element 
\begin{equation} \label{E:characters}
z(M) = z_{\beta}(M):= \sum_i \chi_M(x_i)y_i \in A\ ;
\end{equation}
see \eqref{E:Frob4}. So 
\begin{equation} \label{E:characters1}
\chi_M(\,.\,) = \beta(\,.\,,z(M)) \ .
\end{equation}
In particular, $z(M)$ is independent of the choice of
dual bases $\{x_i \}, \{ y_i \}$. 

If $\beta$ is symmetric then $z(M) \in \cen(A)$ by \eqref{E:center}.

\subsection{The regular character} \label{SS:regular}
The left regular 
representation of $A$ is defined by $A \to \End_R(A)$, 
$a \mapsto (a_A \colon x \mapsto ax)$. Similarly, the right regular 
representation is given by $A \to \End_R(A)$, 
$a \mapsto ({}_Aa \colon x \mapsto xa)$.

If $A$ is Frobenius, with associative nonsingular bilinear form $\beta$
and  dual bases $\{ x_i \}_1^n,  \{ y_i \}_1^n \subseteq A$ for $\beta$,
then equations \eqref{E:Frob3} and \eqref{E:Frob3'} give the following expression
\[
a_A = \sum_i x_i \otimes \beta(a\,.\,,y_i) = \sum_i y_i \otimes \beta(x_i,a\,.\,) 
\in A \otimes_R A^\vee \cong \End_R(A)\ .
\]
Similarly, ${}_Aa = \sum_i x_i \otimes \beta(\,.\,a,y_i) = \sum_i y_i \otimes \beta(x_i,\,.\,a)$.
Taking traces, we obtain 
\begin{equation} \label{E:rightleft}
\tr(a_A) = \sum_i \beta(x_i,ay_i) = \sum_i \beta(x_ia,y_i) = \tr({}_Aa) \ .
\end{equation}
This trace is called the \emph{regular character} of $A$; it will be denoted by
$\chi_{\reg} \in A^\vee$. Since $\tr(a_A) = \sum_i \beta(ax_i,y_i)$ and 
$\tr({}_Aa) = \sum_i \beta(x_i,y_ia)$,
we have
\begin{equation} \label{E:characters2}
\chi_{\reg} = \beta(\,.\,,z) = \beta(z,\,.\,) \qquad \text{with} \quad z = z_\beta:= \sum_i x_iy_i \ .
\end{equation}
Thus, the element $z$ is associated to the regular character as in \eqref{E:characters1}.

\begin{example} \label{EX:regular}
We compute the regular character for the algebra $\End_R(M)$ using the trace form and the dual
bases $\{ x_{i,j}\}, \{ y_{i,j} \}$ from Lemma~\ref{L:trace}. The element $z$ in \eqref{E:characters2}
evaluates to
\[
z = \sum_{i,j} (m_i \otimes f_j)(m_j \otimes f_i) = \sum_{i,j} m_i f_j(m_j) \otimes f_i =
(\rank_R M) 1 \ .
\]
Therefore, the regular character of $\End_R(M)$ is equal to $\tr(\,.\,z) = (\rank_R M)\tr$\,.
\end{example}

\subsection{Central characters}  \label{SS:central}
Assume that $\End_A(M) \cong R$ as $R$-algebras. Then, for each $x \in \cen(A)$,
we have $x_M = \omega_M(x)1_M$ with $\omega_M(x) \in R$. This yields an 
$R$-algebra homomorphism
\[
\omega_M \colon \cen(A) \to R\ , 
\]
called the \emph{central character} of $M$. Since $\tr(x_M) = \omega_M(x)\tr(1_M)$,
we have 
\begin{equation} \label{E:centralcharacter}
\chi_M(x) = \omega_M(x) \rank_R M \qquad (x \in \cen(A)) \ .
\end{equation}

Now assume that $A$ is Frobenius with associative nonsingular bilinear form $\beta$,
and let $z(M) = z_\beta(M) \in A$ be as in \eqref{E:characters}. Then
\begin{equation} \label{E:centralcharacter2}
x z(M) = \omega_M(x) z(M) \qquad (x \in \cen(A)) \ ;
\end{equation}
this follows from the computation 
$\beta(a,x z(M)) = \beta(ax,z(M)) = \chi_M(ax) = \omega_M(x)\chi_M(a) \\
= \omega_M(x)\beta(a,z(M))
= \beta(a,\omega_M(x)z(M))$ for $a \in A$.

If $\beta$ is symmetric then $z(M) \in \cen(A)$ and we can
define the \emph{index}
\footnote{The index $[A:M]$ is also called the \emph{Schur element} of $M$; see, e.g., \cite[Lemma 4.6]{mB09}.}
of $M$ by
\begin{equation} \label{E:index}
[A:M]_\beta := \omega_M(z(M)) \in R \ .
\end{equation}

\subsection{Integrality} \label{SS:integrality}

Let $A$ be a Frobenius $R$-algebra, with associative nonsingular bilinear form $\beta$
and dual bases $\{ x_i \}_1^n,  \{ y_i \}_1^n \subseteq A$.
Assume that we are given a subring $S \subseteq R$. An $S$-subalgebra $B \subseteq A$
will be called a \emph{weak $S$-form of $(A,\beta)$} if the following conditions are satisfied:
\begin{enumerate}
\renewcommand{\labelenumi}{(\roman{enumi})}
\item 
$B$ is a finitely generated $S$-module, and
\item 
$\sum_i x_i \otimes y_i \in A \otimes_R A$ belongs to the image of 
$B \otimes_S B$ in $A \otimes_R A$.
\end{enumerate}
Recall from Section~\ref{SSS:dual} that the element 
$\sum_i x_i \otimes y_i$ only depends on $\beta$. Note also that (ii)
implies that $BR = A$. Indeed, for any $a \in A$, the map 
$\Id_A \otimes \beta(a,\,.\,) \colon A \otimes_R A \to A \otimes_R R = A$ 
sends $\sum_i x_i \otimes y_i$ to $a$ by \eqref{E:Frob3}, and it sends
the image of $B \otimes_S B$ in $A \otimes_R A$ to $BR$.

\begin{lem} \label{L:integrality}
Let $A$ be a symmetric  $R$-algebra with form $\beta$.
Assume that $\End_{A}(M) \cong R$, and  let $z(M) \in \cen(A)$ 
be as in \eqref{E:characters}.
If there exists a weak $S$-form of $(A,\beta)$ for some subring $S \subseteq R$,
then $\chi_M(z(M))$ and $[A:M]_\beta = \omega_M(z(M))$ 
are integral over $S$\,.
\end{lem}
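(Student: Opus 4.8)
The plan is to reduce the whole statement to a single integrality assertion about character values, namely that $\chi_M(b)$ is integral over $S$ for every $b\in B$, and then to propagate this through the definition of $z(M)$. The first preparatory move is to exploit hypothesis (ii). Since the element $\sum_i x_i\otimes y_i\in A\otimes_R A$ lies in the image of $B\otimes_S B$, I can choose $u_j,v_j\in B$ with $\sum_j u_j\otimes v_j=\sum_i x_i\otimes y_i$ in $A\otimes_R A$. Because the dual-basis identity \eqref{E:Frob3} depends only on this tensor, $\{u_j\},\{v_j\}$ are again dual bases for $\beta$; and as $z(M)$ in \eqref{E:characters} is independent of the chosen dual bases, I may recompute $z(M)=\sum_j\chi_M(u_j)\,v_j$ using bases lying in $B$. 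This is the only use of (ii), and it is precisely what replaces the arbitrary $R$-coefficients appearing in $z(M)$ by character values of elements of $B$.

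The crux is the claim that $\chi_M(b)=\tr(b_M)$ is integral over $S$ for every $b\in B$. Since $B$ is a finitely generated $S$-module and a subring of $A$, each $b\in B$ is integral over $S$, say $p(b)=0$ for some monic $p\in S[T]$; applying the $R$-algebra map $a\mapsto a_M$ gives $p(b_M)=0$, so $b_M\in\End_R(M)$ is integral over $S$. It then remains to invoke the general fact that the trace of an $R$-endomorphism of a finitely generated projective $R$-module that is integral over $S$ is itself integral over $S$. I would prove this by writing $M$ as a direct summand of some $R^k$ and extending $b_M$ by zero to a matrix $\Phi\in\Mat_k(R)$ with $\tr\Phi=\chi_M(b)$ and still integral over $S$, then passing to an extension of $R$ over which the characteristic polynomial of $\Phi$ splits: the resulting roots satisfy the same monic equation over $S$, their sum is $\tr\Phi$, and the monic relation it satisfies has coefficients in $S$ and hence already holds in $R$. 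Making this eigenvalue argument rigorous over an arbitrary commutative base, rather than over a field, is the one genuinely delicate point, and I expect it to be the main obstacle; it is a standard property of traces and could alternatively be quoted from the literature on integral ring extensions.

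Granting the crux, the conclusion follows quickly. The elements of $R$ integral over $S$ form a subring $\overline S\subseteq R$ (the integral closure), and all the $\chi_M(u_j),\chi_M(v_j)$ lie in $\overline S$. Hence $\chi_M(z(M))=\sum_j\chi_M(u_j)\chi_M(v_j)\in\overline S$ is integral over $S$, which is the first assertion. For the second, observe that $z(M)=\sum_j\chi_M(u_j)\,v_j$ lies in the subring $\overline S\,B\subseteq A$; here $\overline S\subseteq R$ is central in $A$, so $\overline S\,B$ is closed under multiplication, and it is finitely generated as an $\overline S$-module because $B$ is finitely generated over $S$. Therefore every element of $\overline S\,B$, in particular $z(M)$, is integral over $\overline S$, hence integral over $S$ by transitivity. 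Finally, since $\beta$ is symmetric we have $z(M)\in\cen(A)$, and $\omega_M\colon\cen(A)\to R$ is an $R$-algebra homomorphism fixing $S$; applying $\omega_M$ to a monic $S$-relation satisfied by $z(M)$ yields the same relation for $[A:M]_\beta=\omega_M(z(M))$, as in \eqref{E:index}, showing it integral over $S$.
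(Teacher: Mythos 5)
Your proof is correct and follows essentially the same route as the paper: integrality of each $b\in B$ over $S$ gives integrality of the traces $\chi_M(b)$ (the paper simply cites Bourbaki, Prop.~17 in V.1.6, for the fact that the trace of an $S$-integral endomorphism of a finitely generated projective $R$-module is integral over $S$ --- the one point where your direct eigenvalue sketch would need more care over a general base ring, since roots of the characteristic polynomial in a splitting extension need not satisfy the same monic relation as the matrix), whence $\chi_M(z(M))=\sum_i\chi_M(x_i)\chi_M(y_i)$ is integral, and $z(M)$ itself lies in a subring of $A$ that is module-finite over an integral extension of $S$, so applying the algebra homomorphism $\omega_M$ to a monic relation yields the claim for $[A:M]_\beta$. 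The only cosmetic difference is that the paper works with the finite subring $S'=S[\chi_M(B)]$ where you use the full integral closure $\overline{S}$ of $S$ in $R$.
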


\begin{proof}
All $b \in B$ are integral over $S$ by condition (i). Hence
the endomorphisms $b_M \in \End_{R}(M)$ are integral over $S$, and so are their
traces $\chi_M(b)$; see Prop.~17 in \cite[V.1.6]{nB64} for the latter. 
By (ii), it follows that $\chi_M(z(M)) = \sum_i \chi_M(x_i)\chi_M(y_i)$ is integral
over $S$. Moreover, the
subring $S' = S[\chi_M(B)] \subseteq R$ is finite over $S$ by (i).
Thus, all elements of $BS' \subseteq A$ are integral over $S$. In particular, 
this holds for $z(M)$, whence $\omega_M(z(M))$ 
is integral over $S$.
\end{proof}

\subsection{Idempotents} \label{SS:idempotents}

\begin{prop} \label{P:idempotents}
Let $(A,\beta)$ be a symmetric $R$-algebra. 
Let $e = e^2 \in A$ be an  idempotent and
assume that the $A$-module $M = Ae$ 
satisfies $\End_{A}(M) \cong R$ as $R$-algebras. Let
$\omega_M \colon \cen(A) \to R$ be the central character of $M$
and let $z(M) \in \cen(A)$ 
be as in \eqref{E:characters}. Then:
\begin{enumerate}
\item  
$[A:M]_\beta$ is invertible in $R$, with inverse $\beta(e,1)$.
\item 
$e(M):= [A:M]_\beta^{-1}z(M) \in \cen(A)$ is an idempotent satisfying
$\omega_M(e(M))=1$ and
\[
x e(M) = \omega_M(x) e(M) \qquad (x \in \cen(A))\ .
\]
\item 
Let 
$z = z_\beta = \sum_i x_iy_i \in \cen(A)$ be as in \eqref{E:characters2}. Then
\[
\omega_M(z) \cdot \rank_{R} M  = [A:M]_\beta  \cdot \rank_{R} e(M)A \ .
\]
\end{enumerate}
\end{prop}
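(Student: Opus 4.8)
The plan is to rewrite both sides of the asserted identity as values of characters and then reduce everything to a single pairing under $\beta$, where the symmetry of $\beta$ closes the argument. The guiding observation is that the factor $\rank_R M$ on the left and the factor $\rank_R e(M)A$ on the right are each obtained by evaluating a character on a central element, so both can be transported through $\beta$ to expressions involving $z$ and $z(M)$.

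First I would treat the left-hand side. Since $z = z_\beta \in \cen(A)$ by \eqref{E:characters2}, the central-character formula \eqref{E:centralcharacter} gives $\chi_M(z) = \omega_M(z)\rank_R M$, so the left-hand side equals $\chi_M(z)$. Applying \eqref{E:characters1} to the element $z$ then yields
\[
\omega_M(z)\rank_R M = \chi_M(z) = \beta(z,z(M)).
\]

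Next I would treat the right-hand side. Because $e(M)$ is an idempotent by part (b) and the regular character $\chi_{\reg}$ is the character of $A$ as a left module over itself, \eqref{E:idempotentrank} gives $\rank_R e(M)A = \chi_{\reg}(e(M))$. Using $\chi_{\reg} = \beta(\,.\,,z)$ from \eqref{E:characters2} together with $e(M) = [A:M]_\beta^{-1}z(M)$ from part (b), this becomes
\[
[A:M]_\beta\rank_R e(M)A = [A:M]_\beta\,\beta(e(M),z) = \beta(z(M),z).
\]

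These two reductions leave $\beta(z,z(M))$ on the left and $\beta(z(M),z)$ on the right, and the two agree because $\beta$ is symmetric, which finishes the proof. The calculation itself has no real obstacle; the one thing to keep in view is the pervasive role of the symmetry hypothesis. It is what guarantees that $z$ and $z(M)$ lie in $\cen(A)$---so that \eqref{E:centralcharacter} and the defining property of $e(M)$ from part (b) apply---and it is precisely what delivers the closing equality $\beta(z,z(M)) = \beta(z(M),z)$. For a Frobenius but non-symmetric form neither the centrality nor that last step would be available, which is exactly where such an argument would break down.
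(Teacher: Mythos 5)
Your argument for part (c) is correct and is essentially the paper's own computation: the paper also passes from $\omega_M(z)\rank_R M$ to $\chi_M(z)=\beta(z,z(M))$ and identifies this with $\chi_{\reg}$ evaluated on $z(M)=[A:M]_\beta\,e(M)$; whether one then invokes $\chi_{\reg}=\beta(z,\,.\,)=\beta(\,.\,,z)$ or, as you do, the symmetry $\beta(z,z(M))=\beta(z(M),z)$ is immaterial, since \eqref{E:characters2} already records both expressions for $\chi_{\reg}$. Your closing remarks about where symmetry enters (centrality of $z$ and $z(M)$, the flip of arguments) are also accurate.

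The genuine gap is that you have proved only part (c) of a three-part proposition, while explicitly leaning on parts (a) and (b): you need $[A:M]_\beta$ to be invertible for $e(M)$ to be defined at all, and you need $e(M)$ to be an idempotent in order to apply \eqref{E:idempotentrank} and get $\chi_{\reg}(e(M))=\rank_R e(M)A$. Neither of these is free. Part (a) rests on the observation that $eM=eAe\cong\End_A(M)\cong R$, so that $\chi_M(e)=\rank_R eM=1$ by \eqref{E:idempotentrank}, and then on the chain
\[
1=\chi_M(e)=\beta(e,z(M))=\beta(z(M)e,1)=\omega_M(z(M))\,\beta(e,1),
\]
which uses associativity of $\beta$ and $xe=\omega_M(x)e$ for $x\in\cen(A)$; this is where the hypothesis $M=Ae$ with $\End_A(M)\cong R$ is actually consumed, and it is absent from your write-up. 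Part (b) then follows from \eqref{E:centralcharacter2}: $x\,e(M)=\omega_M(x)e(M)$ for central $x$, together with $\omega_M(e(M))=1$, forces $e(M)^2=e(M)$. Without these two steps your proof of (c) is conditional on statements you have not established, so the proposal as it stands is incomplete rather than wrong.
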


\begin{proof}
Note that $eM =eAe \cong \End_A(M) \cong R$. 
By \eqref{E:idempotentrank}, it follows that $\chi_M(e)= \rank_{R}eM = 1$. 
Since $xe = \omega_M(x)e$ for $x \in \cen(A)$, we 
obtain
\[
1 = \chi_M(e) \underset{\eqref{E:characters1}}{=} \beta(e,z(M)) = 
\beta(z(M) e,1) = \omega_M(z(M))\beta(e,1) \ . 
\]
This proves (a).
In (b),  $\omega_M(e(M))=1$ is clear by definition of $e(M)$, and \eqref{E:centralcharacter2}
gives the identity
$x e(M) = \omega_M(x) e(M)$ for all $x \in \cen(A)$. 
Together, these facts imply that $e(M)$ is an idempotent.
Finally, the following computation
proves (c):
\[
\begin{split}
\omega_M(z) \rank_{R} M &\underset{\eqref{E:centralcharacter}}{=} \chi_M(z) 
\underset{\eqref{E:characters1}}{=} \beta(z,z(M)) \underset{\eqref{E:characters2}}{=} 
\chi_{\reg}(z(M)) \\
&= \omega_M(z(M)) \chi_{\reg}(e(M)) 
\underset{\eqref{E:idempotentrank}}{=} \omega_M(z(M)) \rank_{R} e(M)A \ .
\end{split} 
\]
\end{proof}

We now specialize the foregoing to separable algebras. For background, see 
DeMeyer and Ingraham  \cite{DeMI}.
We mention that, by a theorem of Endo and Watanabe \cite[Theorem 4.2]{sEyW67}, any
faithful separable $R$-algebra  is symmetric.

\begin{prop} \label{P:idempotents2}
Assume that the algebra $A$ is separable and that the $A$-module $M$ 
is cyclic and satisfies $\End_{A}(M) \cong R$.
Let $e(M) \in \cen(A)$ be the idempotent in Proposition~\ref{P:idempotents}(b).
Then:
\begin{enumerate}
\item
 $e(M)A \cong \End_R(M)$ and $\rank_{R} e(M)A =(\rank_{R} M)^2$\,.
\item
$\chi_{\reg}\, e(M) = (\rank_R M)\chi_M$\,, where  $\chi_{\reg}$ is the regular character of $A$\,.
\end{enumerate}
\end{prop}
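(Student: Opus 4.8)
The plan is to exploit the structure theory of central separable (Azumaya) algebras from \cite{DeMI}. First I would check that Proposition~\ref{P:idempotents} genuinely applies: by the Endo--Watanabe theorem $A$ is symmetric, so we have a symmetric nonsingular associative form $\beta$; and since $A$ is separable and $M$ is projective over $R$, the module $M$ is projective over $A$ as well. Being cyclic, $M$ is then a direct summand of $A$, say $M \cong Ae$ for an idempotent $e$, so Proposition~\ref{P:idempotents} produces the central idempotent $e(M) \in \cen(A)$ with $x\,e(M) = \omega_M(x)\,e(M)$ for $x \in \cen(A)$ and $\omega_M(e(M)) = 1$. Write $B = e(M)A$. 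Since $e(M)$ is central, $A = B \oplus (1-e(M))A$ is a decomposition into two-sided ideals, so $B$ is a direct factor of the separable algebra $A$ and is therefore itself separable. Its center is $\cen(B) = e(M)\cen(A) = Re(M) \cong R$, the middle equality because $x\,e(M) = \omega_M(x)\,e(M)$ and $\omega_M$ is surjective. Hence $B$ is central separable over $R$.

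Next I would analyze the representation $\rho \colon B \to \End_R(M)$ afforded by $M$, where $\rho(e(M)a) = a_M$; this is well defined because $e(M)$ acts on $M$ as $\omega_M(e(M))1_M = 1_M$. The relation $\End_A(M) \cong R$ forces $M$ to be faithful over $R$, so $\ann_R M = 0$; since $\ker\rho = \ann_B M$ is a two-sided ideal of the central separable algebra $B$, it equals $(\ann_R M)B = 0$, and $\rho$ is injective. Thus $\rho(B) \cong B$ is a central separable subalgebra of the Azumaya algebra $\End_R(M)$, with center $R\cdot 1$. Its commutant in $\End_R(M)$ is exactly $\End_B(M) = \End_A(M) = R$ (the last two agree because the $A$-action on $M$ factors through $B$). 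The commutor (double centralizer) theorem for central separable algebras \cite{DeMI} now yields $\End_R(M) \cong \rho(B) \otimes_R R \cong B$, so $\rho$ is an isomorphism $e(M)A \iso \End_R(M)$. Using $\End_R(M) \cong M \otimes_R M^\vee$ together with $\rank_R M^\vee = \rank_R M$ then gives $\rank_R e(M)A = \rank_R \End_R(M) = (\rank_R M)^2$, establishing (a).

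For (b) I would pass to the block $B$. Since $e(M)$ is a central idempotent, left multiplication by any $b \in B$ annihilates the complementary summand $(1-e(M))A$, so $b_A$ is block diagonal and $\tr(b_A)$ equals the regular character $\chi^B_{\reg}(b)$ of $B$; in particular $\chi_{\reg}(e(M)x) = \chi^B_{\reg}(e(M)x)$ for all $x \in A$. Transporting along the isomorphism $\rho$ of part (a), $e(M)x$ corresponds to $x_M \in \End_R(M)$, and by Example~\ref{EX:regular} the regular character of $\End_R(M)$ is $(\rank_R M)\tr$. Therefore $\chi_{\reg}(e(M)x) = (\rank_R M)\tr(x_M) = (\rank_R M)\chi_M(x)$ for all $x$, which is precisely the asserted identity $\chi_{\reg}\,e(M) = (\rank_R M)\chi_M$.

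The main obstacle is the identification in (a): everything hinges on recognizing $e(M)A$ as a central separable algebra and on the correct application of the commutor theorem with trivial commutant, which is the one nontrivial external input. Once (a) is in hand, (b) is essentially bookkeeping built on Example~\ref{EX:regular} and the ring decomposition determined by the central idempotent $e(M)$.
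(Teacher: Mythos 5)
Your argument is correct and follows essentially the same route as the paper: reduce to $M\cong Ae$ via projectivity over $A$, invoke the Azumaya/double-centralizer machinery of \cite{DeMI} together with the ideal correspondence for central separable algebras to identify $e(M)A$ with $\End_R(M)$, and then read off (b) from Example~\ref{EX:regular}. The only difference is organizational: the paper proves surjectivity of $A\to\End_R(M)$ first and then computes the kernel as $(1-e(M))A$, whereas you pass to the block $B=e(M)A$ and establish injectivity and surjectivity of $B\to\End_R(M)$ separately — the same two theorems from \cite{DeMI} in a slightly different order.
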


\begin{proof}
We first note that $M$, being assumed projective over $R$, is in fact projective over $A$
by  \cite[Proposition II.2.3]{DeMI}. Since $M$ is cyclic, we have $M \cong Ae$ with $e = e^2 \in A$;
so Proposition~\ref{P:idempotents} applies.

(a) 
It suffices to show that $e(M)A \cong \End_R(M)$, because the rank of $\End_R(M) \cong M \otimes_R M^\vee$
equals $(\rank_{R} M)^2$. Since $M$ is finitely generated projective and faithful over $R$,
the  $R$-algebra $\End_R(M)$ is Azumaya; see \cite[Proposition II.4.1]{DeMI}.  
The Double Centralizer Theorem \cite[Proposition II.1.11 and Theorem II.4.3]{DeMI} and our 
hypothesis $\End_{A}(M) \cong R$ together imply
that the map $A \to \End_R(M)$, $a \mapsto a_M$, is surjective. Letting $I = \ann_AM$ denote the
kernel of this map, we further know by \cite[Corollary II.3.7 and Theorem II.3.8]{DeMI} that
$I = (I \cap \cen(A))A$.
Finally, Proposition~\ref{P:idempotents} tells us that $I \cap \cen(A)$ is generated by $1-e(M)$, 
which proves (a).

(b)
In view of the isomorphism $e(M)A \cong \End_R(M)$, $e(M)a \mapsto a_M$ from part (a) and 
Example~\ref{EX:regular},  we have $\chi_{\reg}(e(M)a) = (\rank_RM)\tr(a_M) = (\rank_RM)\chi_M(a)$\,.
\end{proof}


\section{Separability} \label{S:separability}

The $R$-algebra $A$ is assumed to be Frobenius throughout this section.
We
fix a nonsingular associative $R$-bilinear form $\beta \colon A \times A \to R$ and dual bases
$\{ x_i \}_1^n,  \{ y_i \}_1^n \subseteq A$ for $\beta$.


\subsection{The Casimir operator} \label{SS:Casimir}
Define a map, called the \emph{Casimir operator} 
\footnote{We follow the terminology of Higman's original article \cite{dH55}; the map $c$ is called the
\emph{Gasch{\"u}tz-Ikeda operator} in \cite{cCiR62}.}
of $(A,\beta)$, by
\begin{equation} \label{E:casimir}
c = c_\beta \colon A \to \cen(A) \ , \quad a \mapsto \sum_i y_i a x_i \ .
\end{equation}
In order to check that $c(a) \in \cen(A)$ we calculate, for $a,b \in A$,
\[
b c(a) \underset{\eqref{E:Frob3'}}{=} \sum_{i,j} \beta(x_j,by_i)y_jax_i 
= \sum_{i,j} y_ja\beta(x_jb,y_i)x_i \underset{\eqref{E:Frob3}}{=} c(a)b \ .
\]
The map $c$ is independent of the choice of
dual bases $\{ x_i \},  \{ y_i \}$, because 
$\sum_i x_i \otimes y_i \in A \otimes_R A$
only depends on $\beta$; see Section~\ref{SSS:dual}.

In case $\beta$ is symmetric,  $\{ y_i \},  \{ x_i \}$ are also dual
bases for $\beta$, and hence 
\[
c(a) = \sum_i x_i a y_i \ .
\]
In particular, the element $z = z_\beta$ in \eqref{E:characters2} arises as $z_\beta = c(1)$ if $\beta$
is symmetric. We will refer to
the element $z = z_\beta$ as the 
\emph{Casimir element}
\footnote{In \cite{mB09}, the Casimir element $z$ is referred to as the \emph{central Casimir element}, while
$\sum_i x_i \otimes y_i \in A \otimes_R A$ is called the Casimir element. The latter element is referred to
as the \emph{Frobenius element} in \cite{lK99}.} 
of the symmetric algebra $(A,\beta)$; it depends on
$\beta$ only up to a central unit (see \ref{SS:CasimirIdeal} below).

\subsection{The Casimir ideal}   \label{SS:CasimirIdeal}

Since $c$ is $\cen(A)$-linear, the image $c(A)$ of the map $c = c_\beta$ 
in \eqref{E:casimir} is an ideal of 
$\cen(A)$. This ideal will be called the \emph{Casimir ideal}
\footnote{The Casimir ideal $c(A)$ of a symmetric algebra $A$ coincides with the \emph{projective center} of 
$A$ in the terminology of \cite[Prop.~3.13]{mB09}. 
It is also called the \emph{Higman ideal} of $A$; see \cite{HHKM}.} 
of $A$; it does not depend on the choice of the 
bilinear form $\beta$. Indeed, recall from Section \ref{SSS:change} 
that any two nonsingular forms $\beta, \gamma \in \bil_\text{assoc}(A;R)$ 
are related by $\gamma(\,.\,,a) = \beta(\,.\,,au)$ for
some unit $u \in A$. Hence, if $\{ x_i \},  \{ y_i \} \subseteq A$ 
are dual bases for $\beta$ then $\{ x_i \},  \{ y_i u^{-1} \} \subseteq A$ 
are dual bases for $\gamma$. Therefore, 
\begin{equation*}
c_{\gamma}(a) = c_\beta(u^{-1}a) \qquad  (a \in A)\ .
\end{equation*}
If $\beta$ and $\gamma$ are both
symmetric then $u \in \cen(A)$ and so 
$c_{\gamma}(a) = u^{-1}c_\beta(a)$.

\subsection{The separability locus} \label{SS:separability}
 
For a given Frobenius algebra $A$, we will now determine the 
set of all primes $\fp \in \Spec R$ such that 
the $Q(R/\fp)$-algebra
$A \otimes_{R} Q(R/\fp)$ is separable or, equivalently, the $R_\fp$-algebra
$A \otimes_RR_\fp$ is separable \cite[Theorem II.7.1]{DeMI}.
The collection of these primes is called the separability locus of $A$.

\begin{prop} \label{P:separability}
The separability locus of a Frobenius $R$-algebra $A$ is
\[
\Spec R \setminus V(c(A)\cap R) = \{ \fp \in \Spec R \mid 
\fp \nsupseteq c(A) \cap R\}\ .
\]
\end{prop}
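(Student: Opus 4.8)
The plan is to characterize separability via the standard criterion involving the separability idempotent and the Casimir operator, localizing so that the trivial Picard group and residue-field computations apply. First I would recall the key fact from the theory of separable algebras: for an $R$-algebra $A$ that is finitely generated projective over $R$, separability is equivalent to the existence of a separability idempotent, and by \cite[Theorem II.7.1]{DeMI} separability can be checked after localization at a prime $\fp$, i.e. $A \otimes_R R_\fp$ is separable if and only if $A \otimes_R Q(R/\fp)$ is. Thus the problem reduces to a question about each localization, and since $c$ and the Casimir ideal $c(A)$ behave well under base change (the dual bases $\{x_i\},\{y_i\}$ for $\beta$ localize to dual bases for the localized form), we may work locally and identify the separability locus prime by prime.

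The heart of the argument is Higman's criterion, which I would state and use in the following form: a Frobenius algebra $A$ (say over a local ring, or over a field after passing to the residue structure) is separable if and only if the Casimir element $z = c(1)$, equivalently the Casimir ideal $c(A)$, contains a unit — more precisely, if and only if $1 \in c(A)$. The forward direction is the essential content of Higman's theorem \cite{dH55}: if $A$ is separable, its separability idempotent can be manipulated against the Frobenius form $\beta$ to exhibit $1$ as an element of $c(A)$. Conversely, if $c(a_0) = 1$ for some $a_0 \in A$, then the element $\sum_i y_i a_0 \otimes x_i \in A \otimes_R A$ serves as a separability idempotent for $A$, so $A$ is separable. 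I would carry this out by verifying that the map $A \otimes_R A \to A$, $x \otimes y \mapsto xy$, sends this element to $c(a_0) = 1$, and that it is annihilated by the commutator action, using associativity of $\beta$ together with \eqref{E:Frob3} and \eqref{E:Frob3'} exactly as in the computation showing $c(a) \in \cen(A)$.

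With the local criterion in hand, the remaining step is purely commutative-algebraic bookkeeping to translate ``$1 \in c(A)$ after localizing at $\fp$'' into the stated condition $\fp \nsupseteq c(A) \cap R$. The point is that $c(A)$ is an ideal of $\cen(A)$, and one must descend to an ideal of $R$ to phrase the locus inside $\Spec R$. Here I would use that $c(A) \cap R$ is the relevant ideal: the localized Casimir ideal $c(A)_\fp = c_\fp(A_\fp)$ contains $1$ precisely when $c(A)$ meets $R \setminus \fp$, that is, when $c(A) \cap R \nsubseteq \fp$. The forward implication uses that an element of $c(A)$ which becomes a unit locally can be adjusted, via the $\cen(A)$-linearity of $c$ and the fact that $\cen(A)$ is module-finite over $R$, to produce an element of $c(A) \cap R$ outside $\fp$; the converse is immediate since any element of $c(A) \cap R$ lying outside $\fp$ is a unit in $R_\fp$ and hence in $c(A)_\fp$.

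The main obstacle I anticipate is the descent step that connects the center $\cen(A)$ to the base ring $R$ — namely showing that ``$c(A)$ contains a unit of $\cen(A)_\fp$'' is genuinely equivalent to ``$c(A) \cap R \nsubseteq \fp$,'' rather than merely implied in one direction. This requires care because $c(A)$ lives in $\cen(A)$, not in $R$, and a unit of $\cen(A)_\fp$ need not come from $R_\fp$; one must use that $\cen(A)$ is finite over $R$ (hence integral) so that the radical/support of the ideal $c(A)$ in $\cen(A)$ contracts cleanly to $V(c(A) \cap R)$ in $\Spec R$. Establishing Higman's criterion itself in the relative (over $R$) setting, rather than over a field, is the other delicate point, but this is exactly the generalization of \cite{dH55} that the introduction advertises, and it follows the classical argument with projectivity over $R$ replacing finite-dimensionality.
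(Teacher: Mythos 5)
Your overall strategy --- reduce to Higman's criterion prime by prime and then translate ``$1\in c(A)$ locally'' into a condition on the contracted ideal $c(A)\cap R$ --- is the same as the paper's, but the two reductions are executed differently and the trade-off is worth making explicit. The paper passes to the residue field $F=Q(R/\fp)$ and invokes Higman's theorem only in its classical form over a field; the price is that ``$1\in c(A_F)$'' becomes the condition $\left(A\fp+c(A)\right)\cap R\supsetneqq\fp$, and showing that this fails when $c(A)\cap R\subseteq\fp$ requires integrality of $A$ over $\cen(A)$ and a Going Up argument producing a prime $P$ of $A$ over $\fp$ containing $c(A)A$. You instead localize at $\fp$, and there the ``delicate descent step'' you worry about is in fact elementary and needs none of that machinery: with $S=R\setminus\fp$ one has $c(A_\fp)=S^{-1}c(A)$, and $1\in S^{-1}c(A)$ means $t\left(s\cdot 1-c(a)\right)=0$ for some $a\in A$ and $s,t\in S$, whence $ts=c(ta)\in c(A)\cap R$ and $ts\notin\fp$; the converse is the trivial direction you note. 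So your bookkeeping is actually simpler than the paper's, not harder.

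The genuine gap is elsewhere, in the half of Higman's criterion you did not write out. Your separability idempotent $\sum_i y_ia_0\otimes x_i$ for $c(a_0)=1$ is correct (the verification is exactly the centrality computation for $c(a)$ in Section~\ref{SS:Casimir}), and it yields the implication $\fp\nsupseteq c(A)\cap R\Rightarrow A_\fp$ separable in full. But the reverse inclusion needs ``$A_\fp$ separable over $R_\fp$ $\Rightarrow$ $1\in c(A_\fp)$'', i.e.\ the hard direction of Higman's criterion over a local ring rather than over a field, and this is precisely what the paper's detour through $Q(R/\fp)$ is designed to avoid: the paper states and uses Higman's theorem only for Frobenius algebras over fields. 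The relative statement is true (the Casimir ideal is the projective center, and $1$ lies in it iff $\Id_A$ factors through a free $A\otimes_RA^{\mathrm{op}}$-module iff $A$ is separable), but your assertion that it ``follows the classical argument with projectivity replacing finite-dimensionality'' is the one load-bearing claim that is neither proved in your write-up nor available from the paper's references as cited. Either supply that argument, or reduce to the residue field as the paper does --- in which case your localization descent must be replaced by the integrality/Going Up argument.
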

 
\begin{proof}
The case of a base field $R$ is covered by
Higman's Theorem which states that a Frobenius algebra $A$ over a field $R$ is separable if and only if
$c(A) = \cen(A)$ or, equivalently, $1 \in c(A)$; see \cite[Theorem 1]{dH55} or \cite[71.6]{cCiR62}.

Now let $R$ be arbitrary and let $\fp \in \Spec R$ be given. Put 
$F = Q(R/\fp)$ and $A_F = A \otimes_R F$.
We know that $A_F$ is Frobenius, with form $\bar{\beta} = \beta \otimes_R\Id_F$ and 
corresponding dual bases $\{ \bar{x}_i \}$, $\{ \bar{y}_i \}$, where 
$\bar{\phantom{x}} \colon A \to A_F$, $\bar{x} = x \otimes 1$, denotes the canonical map.
By Higman's Theorem, we know that 
$A_F$ is separable if and only if $1 \in c(A_F) = \bar{c(A)}F$. Thus:
\begin{quote}
The $F$-algebra $A_F$ is separable if and only if $\left( A\fp + c(A) \right) \cap R \supsetneqq \fp$.
\end{quote}
If  $\fp \nsupseteq c(A) \cap R$ then clearly $\left( A\fp + c(A) \right) \cap R \supsetneqq \fp$, and hence
$A_F$ is separable. 
Conversely, assume that $c(A) \cap R \subseteq \fp$. Since $A$ is integral over its center $\cen(A)$, we have
$c(A)A \cap \cen(A) \subseteq \sqrt{c(A)}$, the radical of the ideal $c(A)$; see Lemma 1 in \cite[V.1.1]{nB64}.
Therefore, $c(A)A \cap R \subseteq \sqrt{c(A)} \cap R \subseteq \fp$. By 
Going Up  \cite[13.8.14]{jMcCjR87}, there exists a prime ideal 
$P$ of $A$ with $c(A)A \subseteq P$ and $P \cap R = \fp$. But then 
$ \left( A\fp + c(A) \right) \cap R \subseteq P \cap R = \fp$, and hence
$A_F$ is not separable. This proves the proposition.
\end{proof}

\subsection{Norms} \label{SS:norms}

Assume that the algebra $A$ is free of rank $n$ over $R$.
Then the \emph{norm} of an element $a \in A$ is defined by 
\[
N(a) = \det a_A  \in R \ ,
\]
where $(a_A \colon x \mapsto ax) \in \End_R(A) = \Mat_n(R)$ is the left
regular representation of $A$ as 
in Section \ref{SS:regular}.
The norm map $N \colon A \to R$ satisfies
$N(ab) = N(a)N(b)$ and $N(r) = r^n$
for $a,b \in A$ and $r \in R$. 
Up to sign, $N(a)$ is the constant term of  
the characteristic polynomial of $a_A$.
Since $a$ satisfies this polynomial by the Cayley-Hamilton Theorem, we see
that $a$ divides $N(a)$ in $R[a] \subseteq A$. Putting
\[
N(c(A)) = \sum_{a\in c(A) }RN(a) \ ,
\]
we obtain
$N(c(A)) \subseteq c(A) \cap R$. Moreover, since $N(r) = r^n$ for $r \in R$, 
we further conclude that, for any $\fp \in \Spec R$, we have
\[
\fp \supseteq N(c(A)) \iff \fp \supseteq c(A) \cap R\ .
\]


\section{Additional structure: augmentations, involutions, positivity} \label{S:additional}

\subsection{Augmentations and integrals}  \label{SS:augmentation}

Let $(A,\beta)$ be a Frobenius algebra and suppose that $A$ has an augmentation, that is,
an algebra homomorphism 
\[
\e \colon A \to R \ .
\]
Put $\Lambda_\beta = \br_\beta^{-1}(\e) \in A$, where $\br_\beta$ is as in Section~\ref{SS:nonsingular}; so
$\beta(\Lambda_\beta,\,.\,) = \e$. From \eqref{E:Frob3'}, we obtain the following expression in terms of dual 
bases $\{ x_i\}, \{y_i\}$ for $\beta$:
\begin{equation} \label{E:augmentation1}
\Lambda_\beta = \sum_i \e(y_i)x_i \ .
\end{equation}
The computation $\beta(\Lambda_\beta a, \,.\,) = \beta(\Lambda_\beta, a \,.\,) = \e(a)\e 
= \e(a) \beta(\Lambda_\beta,\,.\,)$ for all $a \in A$ shows that $\Lambda_\beta a = \e(a)\Lambda_\beta$.
Conversely, if $t \in A$ satisfies $t a = \e(a)t$ for all $a \in A$ then $\beta(t, a) =
\beta(t a,1) =  \e(a) \beta(t, 1)$, whence $t = \beta(t, 1)\Lambda_\beta$.
We put
\[
\Hint_A^r = \{ t \in A \mid t a = \e(a)t \text{ for all $a \in A$} \}
\]
and call the elements of $\Hint_A^r$ \emph{right integrals} in $A$. The foregoing shows that
$\Hint_A^r = R \Lambda_\beta$. Moreover, $r\br_\beta^{-1}(\e)=0$ implies $r\e = 0$ and hence $r=0$.
Thus:
\[
\Hint_A^r = R\Lambda_\beta \cong R \ .
\]

Similarly, one can define the $R$-module $\Hint_A^l$ of \emph{left integrals} in $A$ and show that
\[
\Hint_A^l = R \Lambda_\beta' \cong R \qquad \text{with } \Lambda_\beta' = \sum_i \e(x_i)y_i = \bl_\beta^{-1}(\e) \  .
\]
Define the ideal $\edim A$ of $R$ by
\begin{equation} \label{E:augmentation2}
\edim A := \e(\Hint_A^r) = \e(\Hint_A^l) = \e(c(A)) = (\e(z)) \ ,
\end{equation}
where $c(A)$ is the Casimir ideal and $z = z_{\beta} \in \cen(A)$ is as in \eqref{E:characters2}. 
Note that always $c(A) \cap R \subseteq \e(c(A))$; so
\begin{equation} \label{E:augmentation3}
c(A) \cap R \subseteq  \edim A \ .
\end{equation}

If $\beta$ is symmetric then $\br_\beta = \bl_\beta$ and hence $\Lambda_\beta = \Lambda_\beta'$ and
$\Hint_A^r = \Hint_A^l =:\Hint_A$. 
For further information on the material in this section, see \cite[6.1]{lK99}.

\subsection{Involutions} \label{SS:involutions}

Let $A$ be a symmetric algebra with symmetric associative
bilinear form $\beta \colon A \times A \to R$.  
Suppose further that $A$ has an involution $^*$, that is, an $R$-linear endomorphism of $A$ 
satisfying $(xy)^* = y^*x^*$ and $x^{**} = x$ for all $x,y \in A$.
If $A$ is 
$R$-free with basis $\{ x_i \}_1^n$ satisfying
\begin{equation} \label{E:involutions1}
\beta(x_i,x_j^*) = \delta_{i,j}\ ,
\end{equation}
then we will call $A$ a \emph{symmetric $*$-algebra}. The Casimir operator
$c = c_\beta \colon A \to \cen(A)$ takes the form
\begin{equation*} 
c(a) = \sum_i x_i^* a x_i = \sum_i x_i a x_i^* \ ,
\end{equation*}
and the Casimir element $z = z_\beta = c(1)$ is
\begin{equation} \label{E:involutions2}
z = \sum_i x_i^* x_i = \sum_i x_i x_i^*\ .
\end{equation}

\begin{lem} \label{L:involutions}
Let $(A,\beta,*)$ be a symmetric $*$-algebra.
Then:
\begin{enumerate}
\item  $\beta$ is $*$-invariant: $\beta(x,y) = \beta(x^*,y^*)$ for all $x,y \in A$.
\item The Casimir operator $c$ is $*$-equivariant: $c(a)^* = c(a^*)$ for all $a \in A$.
In particular, $z^* = z$.
\item If $a=a^* \in \cen(A)$ then the matrix of $(a_A \colon x \mapsto ax) \in \End_R(A)$ 
with respect to the $R$-basis $\{ x_i \}$ of $A$ is symmetric.
\end{enumerate}
\end{lem}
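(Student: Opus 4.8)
The plan is to handle the three parts in order, deriving (c) from (a), and to locate essentially all the work in part (a).

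For (a), I would introduce the \emph{reflected} form $\gamma(x,y) := \beta(x^*,y^*)$, which is again $R$-bilinear because $*$ is $R$-linear, and show that it coincides with $\beta$. The idea is that $\{x_i\}$ and $\{x_i^*\}$ serve as dual bases for $\beta$ by \eqref{E:involutions1}, and that $\gamma$ retains them: using $x_i^{**}=x_i$ together with the symmetry of $\beta$, one computes $\gamma(x_i,x_j^*) = \beta(x_i^*,x_j) = \beta(x_j,x_i^*) = \delta_{i,j} = \beta(x_i,x_j^*)$. Since $\{x_i\}$ is an $R$-basis of the first argument and $\{x_j^*\}$ an $R$-basis of the second (the latter because $*$ is an $R$-linear bijection), and any $R$-bilinear form on the free module $A$ is determined by its values on such a pair of bases, this forces $\gamma = \beta$, which is precisely the asserted $*$-invariance. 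Alternatively one could invoke Section~\ref{SSS:change}: both $\beta$ and $\gamma$ are nonsingular symmetric associative forms, so $\gamma(\,.\,,\,.\,) = \beta(\,.\,,\,.\,u)$ for a central unit $u$, and evaluating on the dual bases pins down $u=1$; but the direct comparison on basis pairs is shorter.

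For (b), I would apply $*$ to the expression $c(a) = \sum_i x_i^* a x_i$ recorded just above \eqref{E:involutions2}. Anti-multiplicativity of the involution gives $(x_i^* a x_i)^* = x_i^* a^* x_i^{**} = x_i^* a^* x_i$, so $c(a)^* = \sum_i x_i^* a^* x_i = c(a^*)$. Specializing to $a=1$ and noting that $1^*=1$ (the two-sided identity is $*$-fixed by uniqueness) yields $z^* = c(1)^* = c(1) = z$.

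For (c), the key is to read off the matrix entries of $a_A$ through the dual bases. Writing $a x_j = \sum_i M_{i,j}\, x_i$ and pairing against $x_i^*$ via \eqref{E:involutions1} gives $M_{i,j} = \beta(a x_j, x_i^*)$. I would then run the chain
\[
M_{i,j} = \beta(a x_j, x_i^*) = \beta\bigl((a x_j)^*, x_i\bigr) = \beta(x_j^* a, x_i) = \beta(x_j^*, a x_i) = \beta(a x_i, x_j^*) = M_{j,i}\,,
\]
where the second equality is the $*$-invariance from (a) (together with $x_i^{**}=x_i$), the third uses $a^*=a$ with anti-multiplicativity, the fourth is associativity of $\beta$, and the fifth is symmetry of $\beta$. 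This is exactly the symmetry $M_{i,j}=M_{j,i}$ of the matrix; I expect the only real subtlety to be the bookkeeping of the order-reversal under $*$ in part (a), ensuring that the reflected form genuinely keeps the same dual bases. Once (a) is established, parts (b) and (c) are short formal manipulations.
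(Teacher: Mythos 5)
Your proposal is correct and follows essentially the same route as the paper: (a) by comparing $\beta$ and the reflected form on the dual bases $\{x_i\},\{x_j^*\}$ using symmetry and \eqref{E:involutions1}, (b) by applying $*$ termwise to $c(a)=\sum_i x_i^* a x_i$, and (c) by the same chain of identities $a_{i,j}=\beta(x_i^*,ax_j)=\dots=a_{j,i}$ combining associativity, symmetry and $*$-invariance. The only (harmless) difference is that your ordering of the chain in (c) never invokes centrality of $a$, whereas the paper's version uses $a\in\cen(A)$ to commute $a$ past $x_i^*$; both are valid.
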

 
\begin{proof} Part (a) follows from $\beta(x_i,x_j^*) = \delta_{i,j} = \beta(x_j,x_i^*) = \beta(x_i^*,x_j)$,
and (b) follows from $c_\beta(a)^* = \sum (x_i^* a x_i)^* = \sum_i x_i^* a^* x_i = c_\beta(a^*)$.

(c) Let $(a_{i,j}) \in \Mat_n(R)$ be the matrix of $a_A$; so $a x_j = \sum_i a_{i,j} x_i$.
We compute using associativity, symmetry 
and $*$-invariance of $\beta$:
\[
a_{i,j} = \beta(x_i^*,ax_j) =  \beta(ax_i^*,x_j) =  \beta(x_j,ax_i^*) = \beta(x_j^*, ax_i) = a_{j,i} \ .
\]
\end{proof}

\subsection{Positivity} \label{SS:positivity}

Let $(A,\beta,*)$ be a symmetric $*$-algebra with $R$-basis $\{ x_i \}_1^n$ satisfying
\eqref{E:involutions1}. Assume that
$R \subseteq \RR$ and put $R_+ = \{ r \in R \mid r \ge 0 \}$. 
If
\begin{equation} \label{E:positivity}
A_+ := \bigoplus_i R_+ x_i \text{ is closed under multiplication and stable under $*$ }
\end{equation}
then we will then say that $A$ has a \emph{positive structure} and call $A_+$ the \emph{positive 
cone} of $A$. 

We now consider the endomorphism $(z_A \colon x \mapsto zx) \in \End_R(A)$
for the Casimir element $z = z_\beta$ in \eqref{E:involutions2}. 
By Lemma~\ref{L:involutions}, we know that the matrix
of $z_A$ with respect to the basis $\{ x_i \}$ is symmetric.
The following proposition gives further information.

\begin{prop} \label{P:positivity}
Let $(A,\beta,*)$ be a symmetric $*$-algebra over the ring $R \subseteq \RR$,
and let $z = z_\beta$ be the Casimir element.
\begin{enumerate}
\item 
The matrix of $z_A$ with respect to the basis $\{ x_i \}$  is symmetric and
positive definite. In particular, all eigenvalues of $z_A$ are positive real numbers that 
are integral over $R$.
\item 
If $A$ has a positive structure and an augmentation $\e \colon A \to R$ satisfying
$\e(a) > 0$ for all $0 \neq a \in A_+$.
Then the largest eigenvalue of $z_A$ is $\e(z)$.
\end{enumerate}

\end{prop}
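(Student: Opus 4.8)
The plan is to treat the two parts separately; the symmetry of the matrix $S$ of $z_A$ in the basis $\{x_i\}$ is already recorded above, as it follows from $z^*=z$ and $z\in\cen(A)$ via Lemma~\ref{L:involutions}(c). For the positive definiteness in (a), I would compute the quadratic form attached to $S$ directly. Given $v=(v_i)\in\RR^n$, put $a=\sum_i v_i x_i$. Since \eqref{E:involutions1} and symmetry of $\beta$ show that the $i$-th coordinate of any $c\in A$ equals $\beta(c,x_i^*)$, the coordinate vector of $za$ is $Sv$, and therefore
\[
v^{\mathsf T}Sv=\sum_i v_i\,\beta(za,x_i^*)=\beta(za,a^*)\ .
\]
Writing $z=\sum_m x_m^*x_m$ as in \eqref{E:involutions2} and using associativity and symmetry of $\beta$ together with $a^*x_m^*=(x_ma)^*$, this rewrites as $\sum_m\beta\bigl((x_ma)^*,x_ma\bigr)$. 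The key observation is that for any $b=\sum_i w_ix_i$ one has, again by \eqref{E:involutions1} and symmetry, $\beta(b^*,b)=\sum_{i,j}w_iw_j\beta(x_i^*,x_j)=\sum_i w_i^2\ge 0$. Hence $v^{\mathsf T}Sv=\sum_m\|x_ma\|^2\ge 0$, a sum of squared Euclidean norms of coordinate vectors.

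For strictness, if $v\neq 0$ then $a\neq 0$; writing $1=\sum_m c_mx_m$ in $A$ gives $a=\sum_m c_m(x_ma)$, so the elements $x_ma$ cannot all vanish and $v^{\mathsf T}Sv>0$. Thus $S$ is positive definite, so its eigenvalues are positive reals, and they are integral over $R$ because they are roots of the characteristic polynomial of $z_A$, which is monic with coefficients in $R$.

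For (b), I would first note that the positivity hypothesis forces $S$ to be entrywise nonnegative: each $x_m^*x_m$ lies in $A_+$, which is $*$-stable and closed under multiplication, so $z\in A_+$ by \eqref{E:involutions2}, and then $zx_j\in A_+$, giving $S_{ij}\in R_+$. Applying the algebra map $\e$ to $zx_j=\sum_i S_{ij}x_i$ yields $\sum_iS_{ij}\e(x_i)=\e(z)\e(x_j)$, which, since $S$ is symmetric, says exactly that $u=(\e(x_i))$ satisfies $Su=\e(z)u$; and the augmentation hypothesis makes $u$ strictly positive, as each $x_i$ is a nonzero element of $A_+$. The largest eigenvalue of the positive definite $S$ is its spectral radius $\rho(S)$, so it remains to identify $\rho(S)$ with $\e(z)$. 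By the general (not necessarily irreducible) Perron--Frobenius theorem, $\rho(S)$ is an eigenvalue of the nonnegative matrix $S$ with a nonnegative eigenvector $w\neq 0$; pairing and using symmetry,
\[
\rho(S)\,w^{\mathsf T}u=(Sw)^{\mathsf T}u=w^{\mathsf T}Su=\e(z)\,w^{\mathsf T}u\ ,
\]
and since $w\ge 0$, $w\neq 0$, and $u>0$ force $w^{\mathsf T}u>0$, we conclude $\rho(S)=\e(z)$.

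The computational heart of (a) --- turning $v^{\mathsf T}Sv$ into a manifest sum of squares --- is routine once the substitution $z=\sum_m x_m^*x_m$ and the identity $a^*x_m^*=(x_ma)^*$ are in place. The main obstacle is the last identification in (b): without assuming $S$ irreducible one cannot invoke uniqueness of the Perron eigenvector, so the argument instead produces the explicit positive eigenvector $u$ from the augmentation and pairs it against the Perron eigenvector supplied by the weak form of the theorem.
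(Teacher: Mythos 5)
Your proposal is correct and follows essentially the same route as the paper: part (a) rewrites the quadratic form $\beta(a^*,za)$ as the sum of squares $\sum_m\beta\bigl((x_ma)^*,x_ma\bigr)$, and part (b) combines entrywise nonnegativity of the matrix with the strictly positive eigenvector coming from the augmentation (the paper uses the integral $\Lambda=\sum_i\e(x_i^*)x_i$, you use $u=(\e(x_i))$, which amounts to the same thing). The only differences are that you make explicit two points the paper leaves implicit or cites away --- that not all $x_ma$ can vanish when $a\neq 0$, and the Perron--Frobenius step identifying $\e(z)$ with the spectral radius, which you prove directly by pairing against the Perron eigenvector instead of citing Gantmacher.
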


\begin{proof}
(a)
Let $Z = (z_{i,j})$ be the matrix of $z_A$; so $z_{i,j} = \beta(x_i^*,zx_j)$.
Extending $*$ and $\beta$ to $A_\RR = A \otimes_R\RR$ by linearity, one computes
for $x = \sum_i \xi_i x_i \in A_\RR$:
\[
\sum_l \beta((x_lx)^*, x_lx) = \beta(x^*,zx) = (\xi_1,\dots\xi_n) Z (\xi_1,\dots\xi_n)^{\text{tr}} \ .
\]
The sum on the left is positive if $x\neq 0$, because $\beta(y^*,y) = \sum \eta_i^2$ for 
$y = \sum_i \eta_i x_i \in A_\RR$. This shows that $Z$ is positive definite. The assertion about the
eigenvalues of $Z$ is a standard fact about positive definite symmetric matrices over the reals.

(b)
By hypothesis on $A_+$, the matrix of $a_A$ 
with respect to the basis $\{ x_i \}$ has non-negative entries for any $a \in A_+$.
Moreover, the Casimir element $z= \sum_i x_i^*x_i$ belongs to $A_+$, and so
$z_A$ is non-negative.
Now let $\Lambda  = \sum_i \e(x_i^*)x_i \in \Hint_A$ be the integral of $A$ that is associated to 
the augmentation $\e$; see \eqref{E:augmentation1}. Then 
$\Lambda$ is an eigenvector for $z_A$ with eigenvalue $\e(z)$. Since all $\e(x_i^*) > 0$, it
follows that $\e(z)$ is in fact the largest (Frobenius-Perron) eigenvalue of $z_A$ is $\dim_\k H$; 
see \cite[Chapter XIII, Remark 3 on p.~63/4]{fG2}.
\end{proof}

\begin{cor} \label{C:positivity}
If $A$ is a symmetric $*$-algebra over the ring $R \subseteq \RR$, then the Casimir element
$z$ is a regular element of $A$. Furthermore, $A \otimes_R Q(R)$ is separable.
\end{cor}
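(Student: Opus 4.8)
The plan is to derive both assertions from the positive-definiteness established in Proposition~\ref{P:positivity}(a), keeping in mind that the hypothesis $R \subseteq \RR$ forces $R$ to be an integral domain. Consequently $Q(R) = \Fract(R)$ is its field of fractions and the zero ideal $(0)$ is the generic point of $\Spec R$; this observation is what lets a single nonzero element of $c(A) \cap R$ settle the separability claim.

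For the regularity of $z$, I would first recall that a symmetric $*$-algebra is by definition free over $R$ with basis $\{x_i\}$, so $A$ embeds into $A_\RR = A \otimes_R \RR$. The matrix $Z$ of $z_A$ with respect to $\{x_i\}$ is positive definite by Proposition~\ref{P:positivity}(a), hence invertible over $\RR$; thus the extended endomorphism $z_{A_\RR}$ is injective, and so is its restriction $z_A$. Since $z$ is central, the injectivity of the multiplication map $a \mapsto za$ says precisely that $z$ is a non-zero-divisor in $A$, i.e.\ regular.

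For separability, I would compute the norm $N(z) = \det z_A = \det Z$. As $Z$ is positive definite, $\det Z$ is the product of its eigenvalues, all positive, so $N(z) > 0$ and in particular $N(z) \neq 0$ in the domain $R$. Because $z = c(1) \in c(A)$, the inclusion $N(c(A)) \subseteq c(A) \cap R$ recorded in \S\ref{SS:norms} gives $N(z) \in c(A) \cap R$. Hence $c(A) \cap R$ contains the nonzero element $N(z)$, so $(0) \notin V(c(A) \cap R)$; that is, $(0)$ lies in the separability locus described in Proposition~\ref{P:separability}. Therefore $A \otimes_R Q(R/(0)) = A \otimes_R Q(R)$ is separable.

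The argument is essentially immediate once Proposition~\ref{P:positivity}(a) is in hand, and the only point requiring attention is the descent from $\RR$ back to $R$. This is harmless because $A$ is $R$-free (so $A \hookrightarrow A_\RR$) and $R$ is a domain (so one nonzero element of $c(A) \cap R$ already witnesses that the generic point avoids the vanishing locus). I expect the only genuine obstacle to be the bookkeeping observation that the single matrix $Z$ simultaneously yields a nonvanishing determinant, which feeds separability through the norm, and an injective multiplication operator, which gives regularity.
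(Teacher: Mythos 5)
Your proof is correct, and the regularity argument is exactly the paper's: Proposition~\ref{P:positivity}(a) makes the matrix of $z_A$ positive definite, hence invertible over $\RR$, and freeness of $A$ over $R$ lets injectivity descend. For separability the two arguments diverge slightly in how they produce a nonzero element of $c(A)\cap R$ before invoking Proposition~\ref{P:separability} at the generic point $(0)$ of the domain $R$. You take the norm route: $N(z)=\det Z>0$ by positive definiteness, and $N(c(A))\subseteq c(A)\cap R$ from Section~\ref{SS:norms} (using $z=c(1)\in c(A)$ for symmetric $\beta$) immediately gives $c(A)\cap R\neq 0$. The paper instead argues that $z$ is integral over $R$, so that $z\cen(A)\cap R\neq 0$ and hence $c(A)\cap R\neq 0$ since $c(A)$ is an ideal of $\cen(A)$ containing $z$. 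These are close cousins -- the paper's integrality step tacitly relies on the regularity of $z$ to guarantee a nonzero constant term in a minimal monic relation, whereas your version makes the witness completely explicit as $\det Z$ and gets its nonvanishing for free from positive definiteness. Your route is, if anything, the more self-contained of the two, since it leans only on facts already displayed in Sections~\ref{SS:norms} and \ref{SS:positivity}.
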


\begin{proof}
Regularity of $z$ is clear from Proposition~\ref{P:positivity}(a). Since $z$ is integral over $R$, it follows that
$z\cen(A) \cap R \neq 0$. Therefore, $c(A) \cap R \neq 0$ and Proposition~\ref{P:separability} 
gives that $A \otimes_R Q(R)$ is separable.
\end{proof}


\part{HOPF ALGEBRAS} \label{2}

Throughout this part, $H$ will denote a finitely generated projective
Hopf algebra over the commutative ring $R$ (which will be assumed to be a field in
Section~\ref{S:Semisimple}), with
unit $u$, multiplication $m$, counit $\e$, comultiplication $\Delta$, and antipode $\sS$.
We will use the Sweedler notation $\Delta h = \sum h_1 \otimes h_2$.

In addition to the bimodule action of $H$ on $H^\vee$
in \eqref{E:VeeAction}, we now also have an analogous bimodule action of
the dual algebra $H^\vee$ on $H = H^{\vee\vee}$. In order to avoid confusion, it is
customary to indicate the target of the various actions by $\rightharpoonup$ or
$\leftharpoonup$\,:
\begin{equation} \label{E:VeeAction'}
\begin{aligned} 
\langle  a \rightharpoonup f \leftharpoonup b , c \rangle &= \langle f, bca \rangle \qquad
&  &(a,b,c \in H, f \in H^\vee)\ \,,\\
\langle  e , f \rightharpoonup a \leftharpoonup  g \rangle &= \langle gef, a \rangle
&  &(e,f,g \in H^\vee, a \in H)\,.
\end{aligned} 
\end{equation}
Here and for the remainder of this article, $\langle  \,.\, , \,.\, \rangle \colon H^\vee \times H \to R$
denotes the evaluation pairing.


\section{Frobenius Hopf algebras over commutative rings} \label{S:FrobeniusHopf}


\subsection{} \label{SS:Hopf2}

The following result is
due to Larson-Sweedler \cite{rLmS69}, Pareigis \cite{bP71}, and
Oberst-Schneider  \cite{uOhS73}. 

\begin{thm} \label{T:Hopf}
\begin{enumerate}
\item 
The antipode $\sS$ is bijective. Consequently, $\Hint^l_H = \sS(\Hint^r_H )$.
\item
$H$ is a Frobenius $R$-algebra if and only if 
$\Hint^r_H \cong R$. This always holds if $\Pic R = 1$.
Furthermore, if $H$ is Frobenius then so is the dual algebra $H^\vee$.
\item
Assume that $H$ is Frobenius. Then $H$ is symmetric if and only if 
\begin{enumerate}
\item 
$H$ is unimodular (i.e., $\Hint^l_H = \Hint^r_H$), and
\item
$\sS^2$ is an inner automorphism of $H$.
\end{enumerate}
\end{enumerate}
\end{thm}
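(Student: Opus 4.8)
The engine for all three parts is the \emph{Fundamental Theorem of Hopf Modules}, which remains valid for a Hopf algebra $H$ that is finitely generated projective over $R$: if $M$ is a right $H$-Hopf module (a right $H$-module and right $H$-comodule $\rho$ with $\rho(mh)=\rho(m)\Delta(h)$), then the multiplication map $M^{\co H}\otimes_R H \iso M$ is an isomorphism of Hopf modules, where $M^{\co H}$ is the module of coinvariants. I would apply this to $H^\vee$, equipped with the natural right $H$-Hopf module structure built from the $\leftharpoonup$-action of \eqref{E:VeeAction'} together with a compatible $H$-comodule structure. This yields $H^\vee \cong C\otimes_R H$ as $H$-modules, where $C=(H^\vee)^{\co H}$ is exactly the module $\Hint^r_{H^\vee}$ of integrals in the dual algebra. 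Comparing Hattori--Stallings ranks shows $\rank_R C=1$ after localizing at each prime, so $C$ is invertible. Running the same construction with the roles of $H$ and $H^{\vee\vee}=H$ interchanged gives $H\cong C'\otimes_R H^\vee$ with $C'=\Hint_H$ invertible; substituting, one finds $C\otimes_R C'\cong R$, so the integral modules of $H$ and $H^\vee$ are mutually inverse in $\Pic R$. Establishing this theorem and the coinvariant/integral identifications in the present generality (the field case being classical Larson--Sweedler) is the principal technical input.

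For part (a) I would reduce to $R$ local, since $\sS$ is an $R$-linear map between finitely generated projectives and bijectivity may be tested after localizing at each maximal ideal. Over a local ring the invertible modules $\Hint^l_H$ and $\Hint^r_{H^\vee}$ are free of rank one, so I may pick a left integral $t$ and a right cointegral $\lambda$ with $\langle\lambda,t\rangle=1$ (the integral--cointegral pairing being perfect, again by the Fundamental Theorem). Radford's explicit inverse formula for the antipode, built from $t$ and $\lambda$, then exhibits a two-sided inverse of $\sS$, proving it bijective locally and hence globally. The stated consequence is immediate: if $ta=\e(a)t$ is a right integral, applying the anti-homomorphism $\sS$ and using $\e\circ\sS=\e$ gives $\sS(a)\sS(t)=\e(\sS(a))\sS(t)$; as $a$ runs through $H$ so does $\sS(a)$, whence $\sS(t)\in\Hint^l_H$, and bijectivity of $\sS$ upgrades $\sS(\Hint^r_H)\subseteq\Hint^l_H$ to the equality $\Hint^l_H=\sS(\Hint^r_H)$.

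Part (b) then follows from the engine. As $H$ is Frobenius precisely when $H\cong H^\vee$ as left $H$-modules, the isomorphism $H^\vee\cong C\otimes_R H$ shows $H$ is Frobenius iff the invertible module $C$ is free. Since $\sS$ is an $R$-linear bijection carrying $\Hint^r_H$ onto $\Hint^l_H$, and $C\otimes_R C'\cong R$ with $C=\Hint^r_{H^\vee}$, $C'=\Hint_H$, freeness of $C$ is equivalent to $\Hint^r_H\cong R$, which is the asserted criterion. If $\Pic R=1$ every invertible module is free, so $C\cong R$ and $H$ is automatically Frobenius. Finally, $C\otimes_R C'\cong R$ shows $C$ is free iff $C'$ is free, so $H$ is Frobenius iff $H^\vee$ is Frobenius.

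For part (c) I would first invoke the general principle, read off from \ref{SSS:change}, that a Frobenius algebra is symmetric iff the Nakayama automorphism of \ref{SSS:basic} is inner: changing the Frobenius homomorphism by a unit $u$ conjugates the Nakayama automorphism by $u$, so a symmetric form exists iff that automorphism is inner. Assuming $H$ Frobenius, I take the form $\beta(x,y)=\langle\lambda,xy\rangle$ for a free cointegral $\lambda$ and compute its Nakayama automorphism $\eta$ by Radford's formula, of the shape $\eta(h)=\sum\mu(h_1)\,g\,\sS^2(h_2)\,g^{-1}$, with $\mu\in H^\vee$ the modular function (the grouplike measuring non-unimodularity) and $g\in H$ the modular element. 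Pinning down this formula is the main obstacle in this part. Two observations then finish the argument. Applying $\e$ and using $\e\circ\sS^2=\e$ gives $\e\circ\eta=\mu$; since $\e\circ(\text{inner})=\e$, innerness of $\eta$ forces $\mu=\e$, i.e. $H$ is unimodular, giving (i). Once $\mu=\e$ we have $\eta(h)=g\,\sS^2(h)\,g^{-1}$, and because conjugation by the unit $g$ is inner, $\eta$ is inner iff $\sS^2$ is inner, giving (ii). Conversely, (i) and (ii) make $\eta$ inner, hence $H$ symmetric.
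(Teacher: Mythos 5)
Your proposal is essentially sound, but it is worth noting that the paper does not actually prove this theorem: its ``proof'' consists of citations to Pareigis \cite{bP71} (Proposition 4 for (a), Theorem 1 and Proposition 3 for (b)) and Oberst--Schneider \cite{uOhS73} (3.3(2) for (c)), plus the general observation from Section~\ref{SS:augmentation} that any augmented Frobenius algebra has $\Hint^r\cong R$, which gives necessity in (b). What you have written is, in effect, a reconstruction of the content of those references: the Fundamental Theorem of Hopf modules over a commutative base, the invertibility of the integral module and the duality $\Hint_{H^\vee}\otimes_R\Hint_H\cong R$, and the Nakayama-automorphism computation are precisely what Pareigis and Oberst--Schneider establish, and your two flagged ``principal technical inputs'' are exactly the parts the paper outsources. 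Your deductions from those inputs are correct: the $C\otimes_RC'\cong R$ argument recovers Pareigis's Proposition 3 and yields (b) including the $\Pic R=1$ statement and the passage to $H^\vee$; and the criterion ``symmetric iff the Nakayama automorphism is inner'' (consistent with Section~\ref{SSS:change}) combined with $\e\circ\eta=\mu$ gives (c), robustly so, since the extra conjugation by the modular element $g$ in your version of the Nakayama formula is itself inner and therefore does not affect either implication. Two small streamlinings: in (a) you do not need Radford's inverse formula --- the dual-basis identity \eqref{E:Hopf2} already writes every $a\in H$ as an $R$-linear combination of elements $\sS(\Lambda_1)$, so $\sS$ is surjective, and a surjective endomorphism of a finitely generated module over a commutative ring is automatically injective; and in identifying $(H^\vee)^{\co H}$ with $\Hint^r_{H^\vee}$ one must fix the Hopf-module structure on $H^\vee$ with some care, as the standard action involves $\sS$. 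Neither point is a gap in substance, only in economy.
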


\begin{proof}
Part (a) is  \cite[Proposition 4]{bP71} and (c) is \cite[3.3(2)]{uOhS73}.
For necessity of the condition $\int^r_H \cong R$ in (b), in the 
more general context of 
augmented Frobenius algebras, see Section~\ref{SS:augmentation}.
Conversely, if $\int^r_H \cong R$ holds then \cite[Theorem 1]{bP71} asserts that the dual
algebra $H^\vee$ is Frobenius. This forces  $\int_{H^\vee}^r$ to be free of rank $1$ over $R$, and hence
$H$ is Frobenius by \cite[Theorem 1]{bP71}.
The statement about $\Pic R = 1$ is a consequence of the fact that the $R$-module $\int^r_H$ is
invertible (i.e., locally free of rank $1$) for any  finitely generated projective Hopf $R$-algebra $H$; see 
\cite[Proposition 3]{bP71}. 
\end{proof}

\subsection{} \label{SS:Hopf3}

We spell out some of the data associated with a Frobenius Hopf algebra $H$ referring
the reader to the aforementioned references \cite{rLmS69},  \cite{bP71},  \cite{uOhS73}
for complete details. 

Fix a generator $\Lambda \in \int_H^r$\,.
There is a unique
$\lambda \in \Hint_{H^\vee}^l$ satisfying $\lambda \leftharpoonup \Lambda = \e$ or, equivalently,
$\langle \lambda, \Lambda \rangle = 1$.  Note that this equation implies that $\Hint_{H^\vee}^l = R \, \lambda$,
because $\Hint_{H^\vee}^l$ is an invertible $R$-module. A
nonsingular associative bilinear form $\beta = \beta_\lambda$ for $H$ is given by
\begin{equation} \label{E:Hopf1}
\beta(a,b) = \langle \lambda, ab \rangle  \qquad (a,b \in H) \ .
\end{equation}
Dual bases $\{ x_i \}$, $\{ y_i \}$ for $\beta$ are given by 
$\sum x_i \otimes y_i = \sum \Lambda_2 \otimes \sS(\Lambda_1)$:
\begin{equation} \label{E:Hopf2}
a = \sum \langle \lambda,a \sS(\Lambda_1)\rangle \Lambda_2 = 
\sum \langle \lambda , \Lambda_2a \rangle \sS(\Lambda_1) \qquad (a \in H)\,.
\end{equation}
By \cite[p.~83]{rLmS69}, the form $\beta$ is \emph{orthogonal} for the right action $\leftharpoonup$ of
$H^\vee$ on $H$\,:
\begin{equation} \label{E:Hopf2'}
\beta(a,b \leftharpoonup f) = \beta(a \leftharpoonup \sS^\vee(f), b) \qquad (a,b\in H, f \in H^\vee)\ ,
\end{equation}
where $\sS^\vee = \,.\,\circ\sS$ is the antipode of $H^\vee$.

\subsection{} \label{SS:SepLocus}

By \eqref{E:Hopf2} the Casimir operator has the form
\[
c  = c_\Lambda \colon H \to \cen(H) \ , \quad  a \mapsto \sum \sS(\Lambda_1)a\Lambda_2 \ .
\]
In particular, $c(1) = \langle \e,\Lambda \rangle \in R$. Therefore, equality holds in \eqref{E:augmentation3}:
\begin{equation} \label{E:Hopf2''}
\edim H = \langle \e,\Hint_H^r \rangle = \langle \e,\Hint_H^l \rangle = c(H) \cap R \ .
\end{equation}
Proposition~\ref{P:separability} 
now gives the following classical result of Larson and Sweedler \cite{rLmS69}.

\begin{cor} \label{C:Hopfseparability}
The separability locus of a Frobenius Hopf algebra
$H$ over $R$ is
\[
\Spec R \setminus V(\edim H)\,.
\]
In particular, $H$ is separable if and only if $\langle \e,\Lambda \rangle = 1$ for some 
right or left integral $\Lambda \in H$.
\end{cor}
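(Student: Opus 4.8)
The plan is to read off the statement from Proposition~\ref{P:separability} together with the equality \eqref{E:Hopf2''} established just above. First I would apply Proposition~\ref{P:separability} to the Frobenius $R$-algebra $H$: this describes the separability locus as $\Spec R \setminus V(c(H) \cap R)$. Then I would substitute the identity $c(H) \cap R = \edim H$ recorded in \eqref{E:Hopf2''}, whose justification rests on the computation $c(1) = \sum \sS(\Lambda_1)\Lambda_2 = \langle \e, \Lambda \rangle 1 \in R$, which forces equality in the general inclusion \eqref{E:augmentation3}. This immediately yields the first assertion, that the separability locus is $\Spec R \setminus V(\edim H)$.

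For the ``in particular'' clause, I would argue that $H$ is separable precisely when its separability locus is all of $\Spec R$, that is, when $V(\edim H) = \emptyset$, which for an ideal of the commutative ring $R$ holds if and only if $\edim H = R$. It then remains to translate the condition $\edim H = R$ into the existence of an integral $\Lambda$ with $\langle \e, \Lambda \rangle = 1$. Since $H$ is Frobenius, Theorem~\ref{T:Hopf}(b) gives $\Hint_H^r \cong R$, so $\Hint_H^r = R\Lambda$ for a generator $\Lambda$ and hence $\edim H = \langle \e, \Hint_H^r \rangle = (\langle \e, \Lambda \rangle)$ is the principal ideal generated by $\langle \e, \Lambda \rangle$ (cf.\ \eqref{E:augmentation2}). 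This ideal equals $R$ exactly when $\langle \e, \Lambda \rangle$ is a unit; in that case $\langle \e, \Lambda \rangle^{-1}\Lambda$ is again a right integral and pairs with $\e$ to $1$, while the converse direction is immediate. The same reasoning applies to left integrals, using $\Hint_H^l = R\Lambda'$ (equivalently, $\Hint_H^l = \sS(\Hint_H^r)$ from Theorem~\ref{T:Hopf}(a)) and $\edim H = \langle \e, \Hint_H^l \rangle$.

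Essentially all of the work has already been carried out, so I do not anticipate a genuine obstacle here: Higman's criterion is packaged into Proposition~\ref{P:separability}, and the Hopf-specific input---that the distinguished element $c(1)$ already lies in $R$ and that the scalar $\langle \e, \Lambda \rangle$ generates $\edim H$---is exactly \eqref{E:Hopf2''}. The only point demanding a little care is the final translation step, where one must use that the module of integrals is \emph{free} of rank one, so that a generator may be rescaled, rather than merely invertible; this is precisely what the Frobenius hypothesis supplies through Theorem~\ref{T:Hopf}(b).
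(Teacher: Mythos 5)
Your proposal is correct and follows exactly the paper's route: Proposition~\ref{P:separability} combined with the identity $\edim H = c(H)\cap R$ from \eqref{E:Hopf2''}, with the ``in particular'' clause obtained by translating $V(\edim H)=\emptyset$ into the condition that $\langle\e,\Lambda\rangle$ be a unit and then rescaling the generating integral. The paper leaves that last translation implicit, but your filling-in (including the remark that freeness of $\Hint_H^r$ is what permits the rescaling) is precisely the intended argument.
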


The equality $\langle \e,\Lambda \rangle = 1$ implies that $\Lambda$ is an idempotent 
two-sided integral such that
$\Hint^r_H = \Hint^l_H = R\Lambda$, because $\Hint^r_H$ and $\Hint^l_H$
are invertible $R$-modules.

\subsection{} \label{SS:Hopf4}

Let $H$ be a Frobenius Hopf algebra over $R$, and let $\Lambda \in \int_H^r$ 
and $\lambda \in \Hint_{H^\vee}^l$ be as in \ref{SS:Hopf3}; so 
$\langle \lambda, \Lambda \rangle = 1$.
The isomorphisms $\br_\beta$ and $\bl_\beta$ in \eqref{E:Frob1} and
\eqref{E:Frob2} for the the bilinear form $\beta = \beta_\lambda$ in \eqref{E:Hopf1}
will now be denoted by $\br_\lambda$ and $\bl_{\lambda}$, respectively:
\begin{equation} \label{E:Hopf3}
\begin{aligned} 
\br_{\lambda} &\colon H_H \iso H^\vee_H\ , &  a &\mapsto ( \beta(a,\,.\,) = 
\lambda \leftharpoonup a ) \ , \\
\bl_{\lambda} &\colon {}_HH \iso {}_HH^\vee\,,   & a &\mapsto ( \beta(\,.\,,a) = 
a \rightharpoonup \lambda ) \ .
\end{aligned}
\end{equation}
Equation \eqref{E:Hopf2'} states that
\begin{equation} \label{E:Hopf3'}
\br_{\lambda}(a \leftharpoonup \sS^\vee(f))  = f \br_{\lambda}(a) 
\qquad\text{and}\qquad
\bl_{\lambda}(a \leftharpoonup f)  = \sS^\vee(f) \bl_{\lambda}(a) \ .
\end{equation}
for $a \in H$, $f \in H^\vee$. Since $\br_{\lambda}(\Lambda) = \e$ is the identity of ${H^\vee}$, we obtain the
following expression for the inverse of $\br_\lambda$:
\begin{equation} \label{E:Hopf3''}
\br_{\lambda}^{-1} \colon H^\vee_H \iso H_H\ , \quad
f \mapsto ( \Lambda \leftharpoonup \sS^\vee(f) ) \ .
\end{equation}

\subsection{} \label{SS:separable}

In contrast with the Frobenius property, symmetry does not generally pass from $H$ to $H^\vee$; see
\cite[2.5]{mL97a}. We will call $H$ \emph{bi-symmetric} if both $H$ and $H^\vee$ are symmetric.

\begin{lem} \label{L:sep}
Assume that $H$ is Frobenius and fix
integrals $\Lambda \in \int_H^r$ 
and $\lambda \in \Hint_{H^\vee}^l$ such that 
$\langle \lambda, \Lambda \rangle = 1$, as in Sections~\ref{SS:Hopf3}, \ref{SS:Hopf4}. Then:
\begin{enumerate}
\item
If $H$ is involutory (i.e., $\sS^2 = 1$) then the regular character $\chi_{\reg}$  of $H$ is given by
$\chi_{\reg} = \langle \e,\Lambda \rangle \lambda$\,.
\item 
$H$ is symmetric and involutory if and only if $H$ is unimodular
and all left and right integrals in $H^\vee$ belong to $H^\vee_{\trace}$.
\item
Let $H$ be separable and involutory. Then $H$ is bi-symmetric. Furthermore, 
\[
\Hint_{H^\vee} = R\, \chi_{\reg}
\qquad \text{and} \qquad
\operatorname{Dim}_{u^\vee} H^\vee = \left( \rank_R H \right)\ ,
\]
where $u^\vee = \langle \,.\,,1 \rangle$ the counit of $H^\vee$.
\end{enumerate} 
\end{lem}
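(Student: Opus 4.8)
The plan is to prove the three parts of Lemma~\ref{L:sep} in order, exploiting the explicit dual bases $\sum x_i \otimes y_i = \sum \Lambda_2 \otimes \sS(\Lambda_1)$ from \eqref{E:Hopf2} together with the expression $\chi_{\reg} = \beta(z,\,.\,)$ where $z = z_\beta = \sum_i x_iy_i$ from \eqref{E:characters2}. For part (a), I would compute $z$ directly: $z = \sum \Lambda_2 \sS(\Lambda_1)$. When $\sS^2 = 1$, the antipode is an anti-automorphism squaring to the identity, and the standard integral identity $\sum \sS(\Lambda_1)\Lambda_2 = \langle \e,\Lambda\rangle 1$ (or its variant) should collapse this sum to $\langle \e,\Lambda\rangle 1$. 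Then $\chi_{\reg} = \beta(z,\,.\,) = \langle \e,\Lambda\rangle \beta(1,\,.\,) = \langle \e,\Lambda\rangle(\lambda \leftharpoonup 1) = \langle \e,\Lambda\rangle\lambda$ by \eqref{E:Hopf1} and \eqref{E:Hopf3}.

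For part (b), I would use the bimodule-isomorphism criterion for symmetry. By Theorem~\ref{T:Hopf}(c), $H$ is symmetric if and only if it is unimodular and $\sS^2$ is inner; involutory means $\sS^2 = 1$, which is the extreme (trivial) case of inner. The content to extract is the reformulation ``all left and right integrals in $H^\vee$ belong to $H^\vee_{\trace}$.'' Here I would translate the condition that $z = z_\beta \in \cen(H)$ (equivalently that $\beta$ is symmetric, by the discussion around \eqref{E:center} and Section~\ref{SSS:symmetric}) into a statement about $\lambda$ being a trace form, and then note that unimodularity makes the left and right integrals of $H^\vee$ coincide up to scalar with $\lambda$. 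The forward direction uses part (a): symmetric-and-involutory forces $\chi_{\reg}$ to be a scalar multiple of $\lambda$, and $\chi_{\reg} \in H^\vee_{\trace}$ always; combined with unimodularity this puts all integrals in $H^\vee_{\trace}$. For the converse, the trace-form condition on integrals gives symmetry of $\beta$, hence of $H$, while the involutory condition must be recovered from the interplay with unimodularity via Theorem~\ref{T:Hopf}(c)---this is the step I expect to require the most care.

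For part (c), separable and involutory: by Corollary~\ref{C:Hopfseparability}, separability gives $\langle \e,\Lambda\rangle = 1$, so $\Lambda$ is an idempotent two-sided integral and $H$ is unimodular. Involutory plus separable should force $H^\vee$ to be unimodular and involutory as well (separability is self-dual by Theorem~\ref{T:Hopf}(b) applied to $H^\vee$, and $\sS^\vee$ inherits the involutory property since $(\sS^\vee)^2 = (\sS^2)^\vee$). Then part (b) applied to both $H$ and $H^\vee$ yields bi-symmetry. The identity $\Hint_{H^\vee} = R\,\chi_{\reg}$ follows from part (a): with $\langle\e,\Lambda\rangle = 1$ we get $\chi_{\reg} = \lambda$, and $\lambda$ generates $\Hint^l_{H^\vee} = \Hint_{H^\vee}$ (using unimodularity of $H^\vee$). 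Finally, $\operatorname{Dim}_{u^\vee} H^\vee = (\e^\vee\text{-image of the Casimir element of } H^\vee)$ by \eqref{E:augmentation2}; I would compute this using Example~\ref{EX:regular} or \eqref{E:Hopf2''}, observing that $\edim H^\vee = \langle u^\vee, \Hint_{H^\vee}\rangle = \langle u^\vee,\chi_{\reg}\rangle = \chi_{\reg}(1) = \chi_{\reg}(1_H) = \tr(1_H) = \rank_R H$. The main obstacle throughout will be bookkeeping the antipode and the two dual actions $\rightharpoonup, \leftharpoonup$ correctly, especially in verifying the integral identities that make $z$ collapse in part (a) and in tracking which integral ($\Hint^l$ versus $\Hint^r$) of $H^\vee$ equals $\lambda$ once unimodularity is invoked.
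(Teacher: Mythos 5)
Your part (a) is exactly the paper's argument, and the gap you flag yourself in part (b) is real: the proposal as written does not prove (b) in either direction. For the forward direction you argue that $\chi_{\reg}=\langle \e,\Lambda\rangle\lambda\in H^\vee_{\trace}$ forces $\lambda\in H^\vee_{\trace}$, but $\langle \e,\Lambda\rangle$ need not be a unit or even a regular element of $R$ --- it is $0$ precisely when $H$ is not separable (Corollary~\ref{C:Hopfseparability}), and symmetric involutory Hopf algebras with $\langle\e,\Lambda\rangle=0$ exist (e.g.\ $\k G$ for a $p$-group $G$ over $\FF_p$). So no information about $\lambda$ can be extracted this way. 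For the converse you need to recover $\sS^2=1$, and Theorem~\ref{T:Hopf}(c) cannot do this: it only characterizes symmetry by $\sS^2$ being \emph{inner}. The missing ingredient, which the paper uses for both directions at once, is the Oberst--Schneider identification of the Nakayama automorphism of $(H,\beta_\lambda)$ with $\sS^2$ when $H$ is unimodular (\cite[3.3(1)]{uOhS73} in the paper's bibliography). Granting that, $\lambda$ is a trace form $\iff$ $\lambda\leftharpoonup a=a\rightharpoonup\lambda$ for all $a$ $\iff$ the Nakayama automorphism is trivial $\iff$ $\sS^2=1$; and since $H^\vee_{\trace}$ is stable under $\sS^\vee$, containment of $\Hint^l_{H^\vee}$ in $H^\vee_{\trace}$ is equivalent to containment of both $\Hint^l_{H^\vee}$ and $\Hint^r_{H^\vee}$. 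Without some statement pinning down the Nakayama automorphism in terms of $\sS^2$, part (b) is out of reach.

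In part (c) you also assert, rather than prove, that $H^\vee$ is unimodular (``separable plus involutory should force\dots''). Theorem~\ref{T:Hopf}(b) gives that $H^\vee$ is Frobenius, not that it is unimodular, and applying your part (b) to $H^\vee$ would require knowing that the integrals of $H$ are cocommutative, which you have not shown. The paper closes this by a direct computation: $\langle\sS^\vee(\chi_{\reg}),a\rangle=\tr(\sS(a)_A)=\tr(\sS\circ{}_Aa\circ\sS^{-1})=\tr({}_Aa)=\langle\chi_{\reg},a\rangle$, so $\sS^\vee(\chi_{\reg})=\chi_{\reg}$; combined with $\Hint^l_{H^\vee}=R\,\chi_{\reg}$ from part (a) (using $\langle\e,\Lambda\rangle=1$), this shows $\chi_{\reg}$ is a two-sided integral, whence $H^\vee$ is unimodular and $(\sS^\vee)^2=1$ is inner, so Theorem~\ref{T:Hopf}(c) gives bi-symmetry. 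Your computation of $\operatorname{Dim}_{u^\vee}H^\vee=(\rank_RH)$ via \eqref{E:Hopf2''} is correct and matches the paper.
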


\begin{proof}
(a)
Equations \eqref{E:characters2}, \eqref{E:Hopf1} and \eqref{E:Hopf2},
with $z = \sum_i x_iy_i = \sum \Lambda_2\sS(\Lambda_1) = \langle \e,\Lambda \rangle$
(using $\sS^2 = 1$), give
$\chi_{\reg} = \langle \lambda, \,.\,z \rangle = \langle \e,\Lambda \rangle \lambda$\,.

(b)
First assume that $H$ is symmetric and involutory. Then $H$ is unimodular by
Theorem~\ref{T:Hopf}(c). By \cite[3.3(1)]{uOhS73} we further know that the
Nakayama automorphism of $H$ is equal to $\sS^2$, and hence it 
is the identity. 
Thus, $\lambda \leftharpoonup a = a \rightharpoonup \lambda$ for all $a \in H$,
which says that $\lambda$ is a trace form. Hence, $\Hint_{H^\vee}^l \subseteq H^\vee_{\trace}$\,. 
Since $H^\vee_{\trace}$ is stable under
the antipode $\sS^\vee$ of $H^\vee$, it also contains $\Hint_{H^\vee}^r$\,. The converse follows
by retracing these steps.

(c)
Now let $H$ be separable and involutory. By Corollary~\ref{C:Hopfseparability} and the
subsequent remark, $H$ is unimodular and we may choose
$\Lambda \in  \Hint_H$ such that $\langle \e,\Lambda \rangle = 1$. Part (a) gives 
$\Hint_{H^\vee}^l = R\, \chi_{\reg}$\,. The computation
\[
\langle \sS^\vee(\chi_{\reg}), a \rangle = \tr(\sS(a)_A) = \tr(\sS\circ{}_Aa\circ\sS^{-1}) = \tr({}_Aa) 
\underset{\eqref{E:rightleft}}{=} \langle \chi_{\reg}, a \rangle
\]
for $a \in A$ shows that $\sS^\vee(\chi_{\reg}) = \chi_{\reg}$\,. Therefore, we also have 
$\Hint_{H^\vee}^r = R\, \chi_{\reg}$\,.
In view of Theorem~\ref{T:Hopf}(c), this shows that $H$ is bi-symmetric.
Finally, since $\langle \chi_{\reg},1 \rangle = \rank_R H$\,,
equation \eqref{E:Hopf2''} yields
$\operatorname{Dim}_{u^\vee} H^\vee = \left( \rank_R H \right)$.
\end{proof}

As was observed in the proof of (b), the maps  $\br_\lambda$ and $\bl_\lambda$ in \eqref{E:Hopf3}
coincide if and only if $\lambda$ is a trace form. 
In this case, we will denote the $(H,H)$-bimodule isomorphism  $\br_\lambda = \bl_\lambda$ by
$\bb_\lambda$:
\begin{equation} \label{E:Hopf4}
\bb_{\lambda} \colon {}_HH_H \iso {}_H{H^\vee\!}_H\ ,   \quad a \mapsto 
( \lambda \leftharpoonup a  = a \rightharpoonup \lambda ) \ .
\end{equation}
We also remark that the formula $\operatorname{Dim}_{u^\vee} H^\vee = \left( \rank_R H \right)$
in (c) is a special case of the following formula which holds for any involutory $H$\,; see \cite[3.6]{uOhS73}:
\begin{equation} \label{E:DimProd}
\edim H \cdot \operatorname{Dim}_{u^\vee} H^\vee = \left( \rank_R H \right)\ .
\end{equation}

\subsection{} \label{SS:Hopf5}

We let 
\[
\Cocom(H) = \{ a \in H \mid {\textstyle \sum} a_1 \otimes a_2 
= {\textstyle \sum} a_2 \otimes a_1\}
\]
denote the $R$-subalgebra of $H$ consisting of all cocommutative 
elements. Thus, $\Cocom(H^\vee) = H^{\vee}_{\trace}$\,.
Recall from Lemma~\ref{L:sep}(b) that all integrals in $H^\vee$ are
cocommutative if $H$ is symmetric and involutory.

\begin{lem} \label{L:Hopf}
Let $H$ be bi-symmetric and involutory. Fix
a generator $\lambda \in \Hint_{H^\vee}$.
Then the $(H,H)$-bimodule isomorphism 
$\bb_\lambda$ in \eqref{E:Hopf4}
restricts to an isomorphisms
\[
\cen(H) \iso H^{\vee}_{\trace} \qquad \text{and} \qquad \Cocom(H) \iso \cen(H^\vee)\ .
\]
\end{lem}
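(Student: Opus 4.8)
The first isomorphism is an instance of the general principle \eqref{E:center}. Since $\bb_\lambda$ is by construction an isomorphism of $(H,H)$-bimodules $H \iso H^\vee$, restricting it to $H^0(H,H) = \cen(H)$ and to $H^0(H,H^\vee) = H^\vee_{\trace}$ yields the isomorphism $\cen(H) \iso H^\vee_{\trace}$ immediately; no feature of $\bb_\lambda$ beyond bimodule linearity enters here.

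For the second isomorphism I would argue element-wise, using the actions $\rightharpoonup, \leftharpoonup$ of $H^\vee$ on $H$ from \eqref{E:VeeAction'}. The starting point is the pair of characterizations
\begin{align*}
\Cocom(H) &= \{ a \in H \mid f \rightharpoonup a = a \leftharpoonup f \text{ for all } f \in H^\vee \},\\
\cen(H^\vee) &= \{ \phi \in H^\vee \mid \phi f = f \phi \text{ for all } f \in H^\vee \};
\end{align*}
the first follows from the defining condition $\sum a_1 \otimes a_2 = \sum a_2 \otimes a_1$ by pairing the right-hand tensor leg against $f$. The plan is then to intertwine the left and right $H^\vee$-actions through $\bb_\lambda$, establishing
\begin{equation*}
\bb_\lambda(a \leftharpoonup f) = \sS^\vee(f)\,\bb_\lambda(a)
\qquad\text{and}\qquad
\bb_\lambda(f \rightharpoonup a) = \bb_\lambda(a)\,\sS^\vee(f) .
\end{equation*}
Granting these, bijectivity of $\bb_\lambda$ and of $\sS^\vee$ shows that, for $a \in H$, one has $a \in \Cocom(H)$ if and only if $\bb_\lambda(a)$ commutes with every $\sS^\vee(f)$, i.e.\ with all of $H^\vee$, which is exactly $\bb_\lambda(a) \in \cen(H^\vee)$. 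Thus $\bb_\lambda$ carries $\Cocom(H)$ onto $\cen(H^\vee)$.

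The first intertwining relation is precisely the orthogonality \eqref{E:Hopf3'} of $\beta$ for the right action $\leftharpoonup$, read with $\bl_\lambda = \bb_\lambda$ (legitimate since $\lambda$ is a trace form). The second I would obtain by transporting the first through the antipode: combine the Hopf identity $\sS(a \leftharpoonup f) = \sS^\vee(f) \rightharpoonup \sS(a)$ (which uses $\sS^2 = 1$) with the near-equivariance $\bb_\lambda \circ \sS = \mu\,(\sS^\vee \circ \bb_\lambda)$, where $\mu \in R$ is the scalar by which the involution $\sS^\vee$ acts on the rank-one integral module $R\lambda$, so that $\mu^2 = 1$. Writing $f \rightharpoonup a = \sS\bigl(\sS(a) \leftharpoonup \sS^\vee(f)\bigr)$ and applying these two facts in turn, the two occurrences of $\mu$ cancel and the second relation emerges.

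The main obstacle is this second relation: \eqref{E:Hopf3'} delivers only the right-action equivariance, and the left-action version must be manufactured from it via the antipode. The single delicate point is the interaction of $\bb_\lambda$ with $\sS$, and here it is worth stressing that one need not evaluate the scalar $\mu$ (equivalently, one need not prove $\sS^\vee(\lambda) = \lambda$), since $\mu$ enters quadratically and cancels. Everything else — the element-wise descriptions of $\Cocom(H)$ and $\cen(H^\vee)$, and the final commutation argument — is routine.
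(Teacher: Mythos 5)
Your proof is correct, but it takes a different route from the paper's for the second isomorphism. The paper never manipulates elements: it applies the Hochschild principle \eqref{E:center} a second time, now to the symmetric algebra $H^\vee$ with the cocommutative integral $\Lambda$, obtaining an $(H^\vee,H^\vee)$-bimodule isomorphism $\bb_\Lambda \colon H^\vee \iso H$ that restricts to $\cen(H^\vee) \iso \Cocom(H)$, and then transfers this back to $\bb_\lambda$ via the exact identity $\bb_\Lambda(\sS^\vee(f)) = \bb_\lambda^{-1}(f)$ of \eqref{E:bL} (a consequence of \eqref{E:Hopf3''}) together with the $\sS^\vee$-stability of $\cen(H^\vee)$. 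Because that identity is normalized by $\langle \lambda,\Lambda\rangle = 1$, no scalar ever appears. You instead characterize $\Cocom(H)$ as $\{a \mid f \rightharpoonup a = a \leftharpoonup f\ \forall f\}$ (legitimate, since $H \otimes_R H \cong \Hom_R(H^\vee,H)$ for $H$ finitely generated projective) and prove that $\bb_\lambda$ intertwines $\leftharpoonup f$ with left multiplication by $\sS^\vee(f)$ and $f \rightharpoonup$ with right multiplication by $\sS^\vee(f)$; the first is \eqref{E:Hopf3'}, and your derivation of the second — conjugating by $\sS$, using that $\sS^\vee$ is an algebra anti-automorphism and that $\sS^\vee(\lambda) = \mu\lambda$ with $\mu^2 = 1$ (which needs unimodularity of $H^\vee$, available here since $H^\vee$ is symmetric) — checks out, including the cancellation of $\mu$. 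Your version costs an explicit computation and the bookkeeping of $\mu$, but it makes visible exactly which Hopf identities drive the result and avoids introducing $\bb_\Lambda$; the paper's version is shorter because it reuses the abstract bimodule argument and lets the normalized duality \eqref{E:bL} absorb the antipode entirely.
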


\begin{proof}
The isomorphism $\cen(H) \iso H^{\vee}_{\trace} = \Cocom(H^\vee)$ is \eqref{E:center}.
By the same token, fixing 
a (necessarily cocommutative) generator $\Lambda \in \int_H$ 
such that $\langle \lambda, \Lambda \rangle = 1$, we obtain that
$\bb_\Lambda$ is an $(H^\vee,H^\vee)$-bimodule isomorphism
$H^\vee \iso H^{\vee\vee} = H$ that restricts to an isomorphism $\cen(H^\vee) \iso \Cocom(H)$.
By equation \eqref{E:Hopf3''} we have 
\begin{equation} \label{E:bL}
\bb_\Lambda(\sS^\vee(f)) = \bb_\lambda^{-1}(f)
\end{equation}
for $f \in H^\vee$. Since $\cen(H^\vee)$ is stable under the antipode $\sS^\vee$ of $H^\vee$,
we conclude that $\bb_\lambda^{-1}$ restricts to an isomorphism $\cen(H^\vee) \iso \Cocom(H)$,
and hence  $\bb_\lambda$ restricts to an isomorphism $\Cocom(H) \iso \cen(H^\vee)$.
\end{proof}

\subsection{} \label{SS:Hopf6}

Turning to modules now, we review some standard constructions and facts. For
any two left $H$-modules $M$ and $N$, the tensor product $M \otimes_R N$ becomes an
$H$-module via $\Delta$\,, and $\Hom_R(M,N)$ carries the following $H$-module structure:
\[
(a\phi)(m) = \sum a_1\phi(\sS(a_2)m)
\]
for $a\in H, m\in M, \phi \in \Hom_{R}(M,N)$\,. In particular, viewing $R$ as $H$-module
via $\e$, the $H$-action on the dual $M^\vee = \Hom_R(M,R)$ takes the following form:
\[
\langle af, m \rangle  = \langle f,\sS(a)m \rangle
\]
for $a\in H, m\in M, f \in M^\vee$\,. The $H$-invariants in
$\Hom_R(M,N)$ are exactly the $H$-module maps:
\[
\Hom_R(M,N)^H = \{ \phi \in \Hom_{R}(M,N) \mid a\phi = \langle \e,a \rangle \phi \ \forall
a\in A \} = \Hom_H(M,N)\ ;
\]
see, e.g., \cite[Lemma 1]{yZ94}. Moreover, it is easily checked that the canonical map
\[
N \otimes_R M^\vee \to \Hom_R(M,N)\ , \qquad n \otimes f \mapsto (m \mapsto \langle f,m \rangle n)
\]
is a homomorphism of $H$-modules. This map is an isomorphism if $M$ or $N$ is
finitely generated projective over $R$; see \cite[II.4.2]{nB70}. 

Finally, we consider the
trace map $\tr \colon \End_{R}(M) \cong M \otimes_R M^\vee \to R$ of Section~\ref{SS:trace}.

\begin{lem} \label{L:Hopf6}
Let $H$ be involutory and let $M$ be a left $H$-module that is finitely generated generated projective over $R$.
Then the trace map $\tr$  is an $H$-module map.
\end{lem}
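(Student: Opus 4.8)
The plan is to verify directly that $\tr$ intertwines the two $H$-actions, i.e.\ that $\tr(a\phi) = \langle \e, a\rangle\,\tr(\phi)$ for all $a \in H$ and $\phi \in \End_R(M)$, where $R$ carries the $H$-module structure afforded by the counit $\e$. Since $M$ is finitely generated projective over $R$, I would pass through the $H$-module isomorphism $\End_R(M) \cong M \otimes_R M^\vee$ recalled just above the statement, under which $\tr$ is the evaluation map $m \otimes f \mapsto \langle f, m\rangle$. It therefore suffices to test $H$-linearity on decomposable tensors $m \otimes f$.

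Recalling that the $H$-action on a tensor product is via $\Delta$ and that the action on the dual is $\langle af, m\rangle = \langle f, \sS(a)m\rangle$, I would carry out the computation
\begin{equation*}
\tr\bigl(a(m \otimes f)\bigr) = \sum \langle a_2 f, a_1 m\rangle = \sum \bigl\langle f, \sS(a_2)\,a_1\, m\bigr\rangle = \Bigl\langle f, \Bigl(\textstyle\sum \sS(a_2) a_1\Bigr) m\Bigr\rangle .
\end{equation*}
Thus the whole assertion reduces to evaluating the element $\sum \sS(a_2) a_1 \in H$.

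The key step, and the only place the involutory hypothesis is used, is the identity $\sum \sS(a_2) a_1 = \langle \e, a\rangle\, 1$. I would derive it by applying the antipode to the standard axiom $\sum \sS(a_1) a_2 = \langle \e, a\rangle\, 1$: since $\sS$ is an algebra anti-homomorphism fixing $1$, the left-hand side transforms into $\sum \sS(a_2)\,\sS^2(a_1)$, and $\sS^2 = 1$ collapses this to $\sum \sS(a_2) a_1$, while the right-hand side is unchanged. Substituting back yields $\tr(a(m \otimes f)) = \langle \e, a\rangle\,\langle f, m\rangle = \langle \e, a\rangle\,\tr(m \otimes f)$, which is exactly the required $H$-linearity; equivalently, one could phrase the same manipulation via the cyclicity of $\tr$ on $\End_R(M)$.

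I do not expect a real obstacle here: the substance is simply the twisted antipode relation $\sum \sS(a_2) a_1 = \langle \e, a\rangle\, 1$, which holds precisely for involutory $H$ and fails in general (where only $\sum \sS(a_2)\,\sS^2(a_1) = \langle \e, a\rangle\,1$ survives, and the argument would break down). The one point to keep track of is that the reduction to decomposable tensors relies on the isomorphism $\End_R(M) \cong M \otimes_R M^\vee$ being a map of $H$-modules, which in turn is where the hypothesis that $M$ is finitely generated projective over $R$ is consumed.
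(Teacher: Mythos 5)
Your proposal is correct and follows essentially the same route as the paper: reduce via the $H$-module isomorphism $\End_R(M) \cong M \otimes_R M^\vee$ to checking equivariance of the evaluation map on decomposable tensors, and then invoke the identity $\sum \sS(a_2)a_1 = \langle \e, a\rangle 1$, which is exactly where the paper also uses $\sS^2 = 1$. The only difference is that you spell out the derivation of that identity from the antipode axiom, which the paper leaves implicit.
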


\begin{proof}
In view of the foregoing, it suffices to
check $H$-equivariance of the evaluation map $M \otimes_R M^\vee
\to R$. Using the identity $\sum \sS(a_2)a_1 = \langle \e,a \rangle$ for 
$a\in H$ (from $\sS^2 = 1$), we compute
\[
a \cdot (m \otimes f) = \sum a_1m\otimes a_2f \mapsto \sum \langle a_2f,a_1m \rangle
= \sum \langle f, \sS(a_2)a_1m \rangle = \langle \e, a \rangle \langle f,m \rangle \ ,
\]
as desired.
 
\end{proof}

\subsection{} \label{SS:Hopf7}

We now focus on modules over a separable involutory Hopf algebra $H$.
In particular, we will compute the image 
of the central idempotents $e(M)$ from Proposition~\ref{P:idempotents} under the isomorphism
$\cen(H) \iso H^{\vee}_{\trace}$ in Lemma~\ref{L:Hopf} and the $\beta$-index $[H:M]_\beta$ 
of \eqref{E:index}.  Recall that $H$ is bi-symmetric by 
Lemma~\ref{L:sep}(c). 

\begin{prop} \label{P:Hopf7}
Assume that $H$ is separable and involutory. 
Fix
$\Lambda \in \int_H$\,, $\lambda \in \Hint_{H^\vee}$ such that 
$\langle \lambda, \Lambda \rangle = 1$ and let $\beta$ denote the form \eqref{E:Hopf1}\,.
Then, for every cyclic left $H$-module $M$ that is finitely generated projective over $R$
and satisfies $\End_{H}(M) \cong R$\,,
\begin{enumerate}
\item $\rank_RM$ is invertible in $R$\,;
\item $[H:M]_\beta = \langle \e,\Lambda \rangle (\rank_RM)^{-1}$ is invertible in $R$\,;
\item
$\bb_{\lambda}(e(M)) = [H:M]_\beta^{-1}\,\chi_M$\,. In particular, 
$\bb_{\lambda}(e(M)) = (\rank_RM)\,\chi_M$ holds for $\lambda = \chi_{\reg}$\,.
\end{enumerate}
\end{prop}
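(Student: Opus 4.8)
The plan is to apply the machinery of Propositions~\ref{P:idempotents} and \ref{P:idempotents2} to the symmetric algebra $H$ equipped with the form $\beta = \beta_\lambda$ of \eqref{E:Hopf1}, and to translate the resulting statements through the bimodule isomorphism $\bb_\lambda$ of \eqref{E:Hopf4}. First I observe that the hypotheses are exactly those needed to run the earlier results: $H$ is separable by assumption, hence symmetric (indeed bi-symmetric) by Lemma~\ref{L:sep}(c), and $M$ is cyclic with $\End_H(M) \cong R$, so Proposition~\ref{P:idempotents2} applies. Since $M$ is cyclic we write $M \cong He$ for an idempotent $e = e^2 \in H$, and then Proposition~\ref{P:idempotents}(a) identifies $[H:M]_\beta$ as a unit in $R$ with inverse $\beta(e,1) = \langle \lambda, e \rangle$.

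For part (a), the key input is Proposition~\ref{P:idempotents2}(a), which gives $\rank_R e(M)H = (\rank_R M)^2$ together with the isomorphism $e(M)H \cong \End_R(M)$. To extract invertibility of $\rank_R M$ itself, I would combine this with Proposition~\ref{P:idempotents}(c): there $\omega_M(z)\cdot \rank_R M = [H:M]_\beta \cdot \rank_R e(M)H$. Using \eqref{E:characters2} the Casimir element is $z = \langle \e, \Lambda \rangle$ (here I invoke the involutory computation $\sum \Lambda_2 \sS(\Lambda_1) = \langle \e, \Lambda \rangle$ already recorded in the proof of Lemma~\ref{L:sep}(a)), so $\omega_M(z) = \langle \e, \Lambda \rangle$ is a scalar. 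Substituting the rank identity then yields $\langle \e, \Lambda \rangle \rank_R M = [H:M]_\beta (\rank_R M)^2$, and since $[H:M]_\beta$ is a unit this forces $\rank_R M$ to divide a unit, giving (a); rearranging the same equation gives $[H:M]_\beta = \langle \e, \Lambda \rangle (\rank_R M)^{-1}$, which is (b).

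For part (c), the plan is to apply $\bb_\lambda$ to the defining formula $e(M) = [H:M]_\beta^{-1} z(M)$ from Proposition~\ref{P:idempotents}(b). By \eqref{E:characters1} we have $\chi_M(\,.\,) = \beta(\,.\,, z(M))$, which says precisely that $\bb_\lambda(z(M)) = \chi_M$ under the identification $\bb_\lambda(a) = \beta(\,.\,, a)$ in \eqref{E:Hopf4}. Since $[H:M]_\beta^{-1}$ is a central scalar and $\bb_\lambda$ is $R$-linear, applying $\bb_\lambda$ gives $\bb_\lambda(e(M)) = [H:M]_\beta^{-1} \chi_M$ at once. The final assertion, that $\bb_\lambda(e(M)) = (\rank_R M)\chi_M$ when $\lambda = \chi_{\reg}$, follows by specializing: for the separable unimodular normalization ($\langle \e, \Lambda \rangle = 1$) Lemma~\ref{L:sep}(a) gives $\chi_{\reg} = \lambda$, whereupon part (b) reduces $[H:M]_\beta^{-1}$ to $\rank_R M$.

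The main obstacle is bookkeeping the normalizations consistently: one must keep track of the factor $\langle \e, \Lambda \rangle$ and verify that the Casimir element, the index, and the regular character are all computed with respect to the \emph{same} choice of $(\Lambda, \lambda)$ satisfying $\langle \lambda, \Lambda \rangle = 1$. In particular the clean statement $\chi_{\reg} = \lambda$ requires the separable (unimodular, idempotent-integral) normalization, and I would make sure the passage to $\lambda = \chi_{\reg}$ in (c) is justified by Lemma~\ref{L:sep}(c), which guarantees $\Hint_{H^\vee} = R\,\chi_{\reg}$ so that this choice is legitimate. Once these identifications are pinned down, each part is a short substitution into the already-established ring-theoretic identities.
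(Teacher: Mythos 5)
Your parts (b) and (c) are essentially the paper's argument (for (c) you go directly through $\bb_\lambda(z(M))=\chi_M$ rather than through Proposition~\ref{P:idempotents2}(b), which is a harmless and slightly cleaner variant), but your derivation of (a) has a genuine gap, and since (b) is obtained by cancelling $\rank_RM$ in the identity
$\langle \e,\Lambda\rangle\,\rank_RM = [H:M]_\beta\,(\rank_RM)^2$, the gap propagates. That identity only says $r(u-vr)=0$ with $r=\rank_RM$ and $u=\langle \e,\Lambda\rangle$, $v=[H:M]_\beta$ units; over a general commutative ring this does \emph{not} force $r$ to be a unit --- it only shows that $v u^{-1} r$ is an idempotent of $R$ (any nontrivial idempotent $r$ satisfies $r=r^2$, and e.g.\ $3=3^2$ in $\ZZ/6\ZZ$). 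Your phrase ``this forces $\rank_RM$ to divide a unit'' is where the argument breaks: what you actually get is that $\rank_RM$ divides $\langle\e,\Lambda\rangle\rank_RM$, which is vacuous. Some genuinely new input is needed to rule out the idempotent/zero-divisor case.

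The paper supplies that input by a separate argument for (a) which you should adopt: since $\End_H(M)\cong R$ forces $M$ to be $R$-faithful, the trace map $\tr\colon \End_R(M)\to R$ is surjective by \eqref{E:ImTr}, and it is an $H$-module map by Lemma~\ref{L:Hopf6}. For $\phi\in\End_R(M)$ one has $\Lambda\phi=r_\phi 1_M$ because $\Lambda\End_R(M)\subseteq \End_H(M)\cong R$, whence $r_\phi\,\rank_RM=\tr(\Lambda\phi)=\langle\e,\Lambda\rangle\tr(\phi)$; choosing $\phi$ with $\tr(\phi)=1$ and using that $\langle\e,\Lambda\rangle$ is a unit (Corollary~\ref{C:Hopfseparability}, separability) shows $\rank_RM$ is invertible. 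With (a) in hand, your cancellation giving (b) and your computation of (c) go through as written.
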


\begin{proof}
(a)
Our hypothesis $\End_{H}(M) \cong R$ implies that $M$ is faithful as $R$-module. Hence, the
trace map $\tr \colon \End_{R}(M) \cong M \otimes_R M^\vee \to R$ is surjective by \eqref{E:ImTr},
and it is is an $H$-module map by Lemma~\ref{L:Hopf6}. Moreover, for each $\phi \in \End_R(M)$, 
we have $\Lambda \phi = r_\phi 1_M$ for some $r_\phi \in R$, since $\Lambda \End_R(M) \subseteq \End_H(M)
\cong R$. Therefore, $\tr(\Lambda \phi) = r_\phi \rank_RM$ and  $\tr(\Lambda \phi) = \Lambda \tr(\phi)
= \langle \e,\Lambda  \rangle \tr(\phi)$\,. Choosing $\phi$ with $\tr(\phi)=1$, we obtain from 
Corollary~\ref{C:Hopfseparability} that $\tr(\Lambda \phi)$ is a unit in $R$. Hence so is $\rank_RM$,
proving (a).

(b)
By Propositions~\ref{P:idempotents}(c) and \ref{P:idempotents2}(a), we have
\[
\omega_M(z)\,\rank_RM = [H:M]_\beta\,(\rank_RM)^2\ ,
\]
where 
$z = \sum x_iy_i = \langle \e,\Lambda \rangle$ is as in the proof of Lemma~\ref{L:sep}(a).
In view on part (a), the above equality amounts to the asserted formula for $[H:M]_\beta$\,.
Finally, invertibility of $[H:M]_\beta$ is Proposition~\ref{P:idempotents}(a) (and it
also follows from Corollary~\ref{C:Hopfseparability}).

(c)
Proposition~\ref{P:idempotents2}(b) gives $\chi_{\reg} \leftharpoonup e(M) = (\rank_R M)\chi_M$\,,
which is the asserted formula for $\bb_{\lambda}(e(M))$ with $\lambda = \chi_{\reg}$\,.
For general $\lambda$, we have
$\langle \e, \Lambda \rangle \bb_{\lambda}(e(M)) = \chi_{\reg} \leftharpoonup e(M)$ by Lemma~\ref{L:sep}(a).
The formula for $\bb_{\lambda}(e(M))$ now follows from (b).
\end{proof}

\subsection{} \label{SS:weak} 

Assume that, for some subring $S \subseteq R$, there is an $S$-subalgebra $B \subseteq H$
such that 
\begin{enumerate}
\renewcommand{\labelenumi}{(\roman{enumi})}
\item 
$B$ is finitely generated as $S$-module, and
\item 
$((\sS\otimes 1_H)\circ \Delta)(\Lambda) = \sum \sS(\Lambda_1) \otimes \Lambda_2 \in H \otimes H$ 
belongs to the image
of $B \otimes_S B$ in $H \otimes H$\,.
\end{enumerate}
Adapting the teminology of Section~\ref{SS:integrality}, we will call $A$ a 
\emph{weak $R$-form} of $(H,\Lambda)$. 
The following corollary is a consequence of Proposition~\ref{P:Hopf7}(b) and Lemma~\ref{L:integrality}; it
is due to Rumynin \cite{dRxx} over fields of characteristic $0$.

\begin{cor} \label{C:HopfIrreducible}
Let $H$ be separable and involutory. 
Assume that, for some generating integral $\Lambda \in \Hint_H$\,,
there is a weak $S$-form for $(H,\Lambda)$ for some subring $S \subseteq R$.
Then, for every left $H$-module $M$ that is finitely generated projective over $R$
and satisfies $\End_{H}(M) \cong R$\,, the index  $[H:M]_\beta = \langle \e,\Lambda \rangle (\rank_RM)^{-1}$ 
is integral over $S$.
\end{cor}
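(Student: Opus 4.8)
The plan is to read the corollary as the composition of two results already established: the integrality output of Lemma~\ref{L:integrality} and the closed-form evaluation of the index in Proposition~\ref{P:Hopf7}(b). Almost all of the work is bookkeeping that reconciles the hypothesis of Section~\ref{SS:weak} with the hypothesis of Lemma~\ref{L:integrality}; only one point requires genuine care.

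First I would confirm that $(H,\beta)$ is a \emph{symmetric} $R$-algebra, so that Lemma~\ref{L:integrality} is available. As $H$ is separable and involutory, Lemma~\ref{L:sep}(c) shows $H$ is bi-symmetric, hence unimodular, and Lemma~\ref{L:sep}(b) shows that every integral in $H^\vee$ is a trace form. In particular the normalized $\lambda \in \Hint_{H^\vee}$ with $\langle \lambda, \Lambda \rangle = 1$ is a trace form, so $\br_\lambda = \bl_\lambda$ and the associative nonsingular form $\beta = \beta_\lambda$ of \eqref{E:Hopf1} is symmetric. Consequently $z(M) \in \cen(H)$ and the index $[H:M]_\beta = \omega_M(z(M))$ of \eqref{E:index} are defined.

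Next I would identify the two notions of weak $S$-form. By \eqref{E:Hopf2}, dual bases $\{x_i\},\{y_i\}$ for $\beta$ satisfy $\sum_i x_i \otimes y_i = \sum \Lambda_2 \otimes \sS(\Lambda_1)$, whereas condition (ii) of Section~\ref{SS:weak} is phrased in terms of $\sum \sS(\Lambda_1) \otimes \Lambda_2$. These two tensors are exchanged by the flip $a \otimes b \mapsto b \otimes a$ of $H \otimes_R H$, under which the image of $B \otimes_S B$ is stable because both tensor factors equal $B$. Hence a weak $S$-form of $(H,\Lambda)$ in the sense of Section~\ref{SS:weak} is precisely a weak $S$-form of $(H,\beta)$ in the sense of Section~\ref{SS:integrality}. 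Now Lemma~\ref{L:integrality}, applied to the symmetric algebra $(H,\beta)$ and to $M$ (using $\End_H(M) \cong R$), gives at once that $[H:M]_\beta = \omega_M(z(M))$ is integral over $S$; note that this step needs no cyclicity of $M$.

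It remains to pin down the numerical value, and this is where I expect the only real obstacle. Proposition~\ref{P:Hopf7}(b) yields $[H:M]_\beta = \langle \e, \Lambda \rangle(\rank_R M)^{-1}$, but is stated for \emph{cyclic} $M$, whereas the corollary imposes only $\End_H(M) \cong R$. I would close this gap by showing that over the separable algebra $H$ such an $M$ is automatically cyclic: $M$ is $H$-projective by \cite[Prop. II.2.3]{DeMI}, and the structure of separable algebras (Azumaya over the finite separable center $\cen(H)$, together with the Double Centralizer input already used in Proposition~\ref{P:idempotents2}) forces $M \cong He$ for an idempotent $e$. With cyclicity in hand, Proposition~\ref{P:Hopf7}(b) applies and the displayed formula holds, completing the argument. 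The purely ring-theoretic integrality conclusion, by contrast, is robust and survives even without this refinement.
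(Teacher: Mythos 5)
Your overall route is exactly the paper's: the text offers no separate proof of Corollary~\ref{C:HopfIrreducible} beyond declaring it ``a consequence of Proposition~\ref{P:Hopf7}(b) and Lemma~\ref{L:integrality},'' and your first two paragraphs supply precisely the bookkeeping that makes this work. In particular, the observation that the tensor $\sum \sS(\Lambda_1)\otimes\Lambda_2$ of Section~\ref{SS:weak} and the dual-basis tensor $\sum_i x_i\otimes y_i=\sum\Lambda_2\otimes\sS(\Lambda_1)$ of \eqref{E:Hopf2} differ by the flip of $H\otimes_R H$, under which the image of $B\otimes_S B$ is stable, is a correct and genuinely needed reconciliation of the two notions of weak $S$-form; and you are right that Lemma~\ref{L:integrality} then delivers the integrality of $[H:M]_\beta=\omega_M(z(M))$ with no cyclicity assumption.

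The one step that fails is your claim that separability of $H$ together with $\End_H(M)\cong R$ forces $M$ to be cyclic. This is false over a general base ring: take $H=R$ (a separable, involutory Hopf algebra) and $M=P$ an invertible but non-free $R$-module, so that $\End_H(M)=\End_R(P)\cong R$ while $M\not\cong He$ for any idempotent $e$. Being a faithful finitely generated projective module over an Azumaya algebra with trivial endomorphism ring only yields cyclicity \emph{locally} (over each $R_\fp$ one has $\End_{R_\fp}(M_\fp)\cong\Mat_n(R_\fp)$ and Morita theory gives $M_\fp\cong R_\fp^n\cong H_\fp e$); globally the obstruction lives in $\Pic R$. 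The correct repair is exactly this localization: every quantity in the identity $\omega_M(z(M))=\langle\e,\Lambda\rangle(\rank_R M)^{-1}$ --- the central character, $z(M)$, $\langle\e,\Lambda\rangle$, and the Hattori--Stallings rank --- commutes with passage to $R_\fp$, and $M_\fp$ \emph{is} cyclic over $H_\fp$, so Proposition~\ref{P:Hopf7}(b) applies locally and the identity holds in $R$ because it holds in every $R_\fp$. (Alternatively, one simply reads the corollary with the cyclicity hypothesis of Proposition~\ref{P:Hopf7} carried along, which is evidently what the paper intends.) With that substitution your argument is complete; as you note, the integrality conclusion itself never depended on this point.
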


\begin{example} \label{EX:grpalg}
The group algebra $RG$ of a finite group $G$ has generating (right and left) integral
$\Lambda = \sum_{g \in G} g$.
The corresponding  integral $\lambda \in \Hint_{(R G)^\vee}$ with $\langle \lambda, \Lambda \rangle = 1$ 
is the trace form given by 
$\langle \lambda,\sum_{g \in G} r_gg \rangle = r_1$.
Note that $\langle \e,\Lambda\rangle = |G|\,1$. Thus, Corollary~\ref{C:Hopfseparability} tells us
that $R G$ is semisimple if and only if
$|G|\,1$ is a unit in $R$; this is Maschke's classical theorem.
Assuming $\ch R = 0$, a weak $\ZZ$-form for $(R G, \Lambda)$ is given by the integral
group ring $B = \ZZ G$. Therefore, Corollary~\ref{C:HopfIrreducible} yields the following
version of Frobenius'
Theorem:
The rank of every $R$-free $RG$-module $M$ such that $\End_{RG}(M) \cong R$ divides the 
order of $G$.
\end{example}


\section{Grothendieck rings of semisimple Hopf algebras} \label{S:Semisimple}

From now on, we will focus on the case where $R=\k$ is a field. Throughout, we will assume
that $H$ is a split semisimple
Hopf algebra over $\k$. In particular, $H$ is finite-dimensional over $\k$ and hence Frobenius. 
We will write $\otimes = \otimes_\k$ and $\k$-linear duals will now be denoted by $(\,.\,)^*$.
Finally, $\Irr H$ will denote a full set of non-isomorphic irreducible $H$-modules.


\subsection{The Grothendieck ring} \label{SS:Grothendieck} 
We review some standard material; for details, see \cite{mL97a}.

\subsubsection{} \label{SSS:G0}
The \emph{Grothendieck ring} $G_0(H)$ of $H$ is
the abelian group that is generated by the isomorphism classes
$[V]$ of finite-dimensional left $H$-modules $V$ modulo the relations
$[V]=[U]+[W]$ for each short exact sequence $0\to U\to V\to W\to 0$.
Multiplication in $G_0(H)$ is given by $[V]\cdot [W]=[V\otimes W]$\,.
The
subset $\{ [V] \mid V \in \Irr H \} \subseteq G_0(H)$ forms 
a $\ZZ$-basis of $G_0(H)$, and the positive cone
\[
G_0(H)_+ := \bigoplus_{V \in \Irr H} \ZZ_+[V] = \{ [V] \mid \text{$V$ a finite-dimensional $H$-module}\}
\]
is closed under multiplication.
The Grothendieck ring $G_0(H)$ has the \emph{dimension augmentation},
\[
\dim \colon G_0(H) \to \ZZ \ , \qquad [V] \mapsto \dim_\k V \ ,
\]
and an involution given by $[V]^* = [V^*]$, where the dual $V^* = \Hom_\k(V,\k)$ has $H$-action
as in Section~\ref{SS:Hopf6}. The basis $\{ [V] \mid V \in \Irr H \}$ is stable under
the involution $*$\,, and hence so is the positive cone $G_0(H)_+$.

\subsubsection{} \label{SSS:G0symm}
The Grothendieck ring $G_0(H)$ is a symmetric
$*$-algebra over $\ZZ$. 
A suitable bilinear form $\beta$ 
is given by 
\begin{equation} \label{E:G0beta}
\beta([V],[W]) = \dim_\k\Hom_H(V,W^*) \ .
\end{equation}
Using the
standard isomorphism $(W \otimes V^* )^H \cong \Hom_H(V,W)$,
where $(\,.\,)^H$ denotes the space of $H$-invariants, this form is easily seen to be
$\ZZ$-bilinear, associative, symmetric, and $*$-invariant.
Dual $\ZZ$-bases of $G_0(H)$ are provided by $\{ [V] \mid V \in \Irr H \}$ and 
$\{ [V^*] \mid V \in \Irr H \}$:
\begin{equation*} \label{E:ON}
\beta([V'],[V^*]) = \delta_{[V'],[V]} \qquad (V,V' \in \Irr H) \ .
\end{equation*}

The integral in $G_0(H)$ that is associated to the dimension augmentation of $G_0(H)$
as in Section \S\ref{SS:augmentation} is the class $[H]$ of the regular representation of $H$:
$\beta([H],[V]) = \dim_\k V$. Thus,
\begin{equation} \label{E:G0int}
\Hint_{G_0(H)} = \ZZ \, [H] \ .
\end{equation}

\subsubsection{} \label{SSS:charactermap}
The \emph{character map}
\[
\chi \colon G_0(H) \to H^* \ , \qquad [V] \mapsto \chi_V
\]
is a ring homomorphism that respects augmentations:
\[
\xymatrix{%
G_0(H) \ar@{>>}[d]_{\dim} \ar[r]^-{\chi}  & H^* \ar@{>>}[d]^{u^*}   \\
\ZZ \ar[r]    &  \k}
\]
Moreover,
\[
\chi_{V^*} = \sS^*(\chi_V) = \chi_V \circ \sS \ .
\]
Thus, if $H$ is involutory then $\chi$ also commutes with the standard involutions on $G_0(H)$ 
and $H^*$.
The class of the regular representation $[H] \in \Hint_{G_0(H)}$ 
is mapped to the regular character $\chi_{\reg} \in H^*$. If $H$ is involutory then $\chi_{\reg}$ 
is a nonzero integral of $H^*$; see Lemma~\ref{L:sep}.
Thus, in this case, we have
\[
\k \,\chi(\Hint_{G_0(H)}) = \k\, \chi_{\reg} = \Hint_{H^*} \ .
\]

\subsubsection{} \label{SSS:RH}

The $\k$-algebra $R(H):= G_0(H)\otimes_{\ZZ}\k$ is called the \emph{representation algebra} of $H$.
The map $[V]\otimes 1 \mapsto \chi_V$ gives an algebra embedding $R(H) \into H^*$ whose image is
the subalgebra $H^*_{\trace} = (H/[H,H])^*$ of all trace forms on $H$:
\[
 R(H) \iso H^*_{\trace} \subseteq H^* \ .
\]


\subsection{} \label{SS:Zhu} 

As an application of Proposition~\ref{P:Hopf7}, we prove the following elegant generalization of
Frobenius' Theorem (see Example~\ref{EX:grpalg}) in characteristic $0$ 
due to S. Zhu \cite[Theorem 8]{sZ93}.

\begin{thm} \label{T:Zhu}
Let $H$ be a split semisimple Hopf algebra over a field $\k$
of characteristic $0$ and let $V \in \Irr H$ be such that $\chi_V \in \cen(H^*)$. 
Then $\dim_\k V$ divides $\dim_\k H$.
\end{thm}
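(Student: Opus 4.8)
The plan is to apply Proposition~\ref{P:Hopf7} to $H$ itself, with $M = V$, and to read the divisibility off from the $\beta$-index $[H:V]_\beta$. First I would check the hypotheses of that proposition: over a field of characteristic $0$ a split semisimple Hopf algebra is separable (Maschke, using that characteristic $0$ fields are perfect) and involutory, $\sS^2 = 1$ (Larson--Radford); and $V$ is cyclic, finite-dimensional, with $\End_H(V) \cong \k$ by the splitting hypothesis. I then fix the integrals in the normalization that makes the index a ratio of dimensions: by separability of $H^*$ one may choose $\lambda \in \Hint_{H^*}$ with $\langle \lambda, 1\rangle = 1$, and then $\Lambda \in \Hint_H$ with $\langle \lambda, \Lambda\rangle = 1$. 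Pairing the identity $\chi_{\reg} = \langle \e, \Lambda\rangle\lambda$ of Lemma~\ref{L:sep}(a) with $1 \in H$ gives $\langle \e,\Lambda\rangle = \langle\chi_{\reg},1\rangle = \dim_\k H$, so Proposition~\ref{P:Hopf7}(b) yields
\[
[H:V]_\beta = \langle\e,\Lambda\rangle(\dim_\k V)^{-1} = \frac{\dim_\k H}{\dim_\k V} \in \QQ \subseteq \k.
\]
This reduces the theorem to the assertion that the rational number $\dim_\k H/\dim_\k V$ is integral over $\ZZ$, since a rational number integral over $\ZZ$ lies in $\ZZ$, which is exactly $\dim_\k V \mid \dim_\k H$.

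For the integrality I would finally bring in the so-far-unused hypothesis $\chi_V \in \cen(H^*)$. Writing $z(V) \in \cen(H)$ as in \eqref{E:characters}, equation \eqref{E:characters1} gives $\bb_\lambda(z(V)) = \chi_V$, so $z(V) = \bb_\lambda^{-1}(\chi_V)$ and $[H:V]_\beta = \omega_V(z(V))$ is the scalar by which the central element $z(V)$ acts on $V$. Now Lemma~\ref{L:Hopf} identifies $\bb_\lambda^{-1}(\cen(H^*)) = \Cocom(H)$, so the centrality of $\chi_V$ forces $z(V) \in \cen(H) \cap \Cocom(H)$: the element whose $V$-eigenvalue we must control is both central and cocommutative. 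This is the structural analogue of the classical class-sum combination $\sum_g \chi_V(g) g^{-1}$ in $\cen(\k G)$ underlying Frobenius' theorem (Example~\ref{EX:grpalg}); the essential point is that, although $H$ itself carries no $\ZZ$-structure, the cocommutative/character data do.

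The main obstacle is precisely supplying that $\ZZ$-structure. Here I would use $G_0(H)$: the character ring $\chi(G_0(H)) \subseteq H^*_{\trace}$ is a weak $\ZZ$-form of the representation algebra $R(H) = G_0(H)\otimes_\ZZ\k$ in the sense of Section~\ref{SS:integrality}, because its dual bases $\{[W]\},\{[W^*]\}$ from Section~\ref{SSS:G0symm} already satisfy $\sum_W [W]\otimes[W^*] \in G_0(H)\otimes G_0(H)$. Since $\chi_V$ lies in this $\ZZ$-form and is central, I would feed the associated idempotent into Lemma~\ref{L:integrality} and transport the conclusion back through the isomorphisms of Lemma~\ref{L:Hopf}, thereby obtaining that $\omega_V(z(V)) = \dim_\k H/\dim_\k V$ is integral over $\ZZ$. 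I expect the delicate part to be purely organizational rather than a single hard estimate: reconciling the index normalization on the $H$-side (where $[H:V]_\beta = \dim_\k H/\dim_\k V$) with the intrinsic index of $R(H)$, and verifying that centrality of $\chi_V$ genuinely produces a module over the relevant $\ZZ$-order whose Schur element is the one we need. Once that bookkeeping is in place, combining it with the reduction of the first paragraph completes the proof.
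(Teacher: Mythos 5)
Your first paragraph reproduces the paper's reduction correctly: Proposition~\ref{P:Hopf7} applied to $(H,V)$ gives $[H:V]_\beta=\dim_\k H/\dim_\k V$, so the theorem amounts to showing that this rational number is integral over $\ZZ$, equivalently that $z(V)=\bb_\lambda^{-1}(\chi_V)=\frac{\dim_\k H}{\dim_\k V}\,e(V)$ is integral over $\ZZ$; and you correctly locate where the hypothesis $\chi_V\in\cen(H^*)$ enters, namely via Lemma~\ref{L:Hopf} it places $z(V)$ in $\cen(H)\cap\Cocom(H)$.

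The gap is in the integrality step, and it is not ``purely organizational.'' The $\ZZ$-order you propose, $\chi(G_0(H))\subseteq R(H)\subseteq H^*$, sits on the wrong side of the duality: it certifies integrality of elements of $H^*$ (such as $\chi_V$ itself), whereas the element you must control, $z(V)$, lives in $H$, and $\bb_\lambda^{-1}$ is merely $\k$-linear, so it does not transport integrality. Moreover, Lemma~\ref{L:integrality} applied to $R(H)$ with its weak $\ZZ$-form $G_0(H)$ yields integrality of the Schur elements $[R(H):M]_{\beta'}=\dim_\k H^*/\dim_\k(H^*\otimes_{R(H)}M)$ --- that is exactly Theorem~\ref{T:classeqn} --- and there is no irreducible $R(H)$-module in evidence whose Schur element equals $\dim_\k H/\dim_\k V$. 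What the paper supplies at precisely this point, and what your sketch lacks, is a $\ZZ$-order inside $\Cocom(H)$: it writes $\sS^*(\chi_V)=\sum_M c_M\,e(M)$ over the central primitive idempotents of $H^*$, observes that the coefficients $c_M$ are algebraic integers because $\sS^*(\chi_V)\in\chi(G_0(H))$ is integral over $\ZZ$, and then applies Proposition~\ref{P:Hopf7}(c) \emph{to $H^*$ in place of $H$} to obtain $\bb_\Lambda(e(M))=(\dim_\k M)\chi_M\in\chi(G_0(H^*))$. Hence, by \eqref{E:bL}, $z(V)=\bb_\Lambda(\sS^*(\chi_V))$ lies in $\cO\cdot\chi(G_0(H^*))$, a ring finitely generated as a module over the algebraic integers $\cO$, and is therefore integral over $\ZZ$. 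In short, the correct analogue of the classical class sums is the image of the dual Grothendieck ring $G_0(H^*)$ inside $\Cocom(H)$, not $G_0(H)$ inside $H^*$; without bringing in $G_0(H^*)$ the argument does not close.
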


\begin{proof}
By \cite[Theorem 4]{rLdR87} $H$ is involutory and cosemisimple. 
Let $\Lambda \in \Cocom(H)$ denote the character of the regular representation of $H^*$; this
is an integral of $H$ by Lemma~\ref{L:sep} and, clearly, $\langle \e,\Lambda \rangle = \dim_\k H$. 
Let $\lambda \in \Hint_{H^*}$ be such that $\langle \lambda,\Lambda \rangle = 1$ and consider
the isomorphism $\bb_{\lambda} \colon {}_HH_H \iso {}_H{H^\vee\!}_H$ in \eqref{E:Hopf4}. By
Proposition~\ref{P:Hopf7}, we have
$\bb_{\lambda}(e(V)) = \frac{\dim_\k V}{\dim_\k H}\, \chi_V$
and \eqref{E:bL} gives 
\[
\frac{\dim_\k H}{\dim_\k V}\,e(V) = \bb_{\Lambda}(\sS^*(\chi_V)) \ .
\]
Therefore, it suffices to show that $\bb_{\Lambda}(\sS^*(\chi_V))$ is integral over $\ZZ$.

By hypothesis, $\sS^*(\chi_V) \in \cen(H^*)$. Furthermore, $\sS^*(\chi_V) \in \chi(G_0(H))$ is 
integral over $\ZZ$. Hence $\sS^*(\chi_V) \in \cen(H^*)^{\text{cl}}$, the integral closure of $\ZZ$ in
$\cen(H^*)$. Passing to an algebraic closure of $\k$, as
we may, we can assume that $H^*$ and $\cen(H^*)$ are split semisimple. Thus, 
$\cen(H^*) = \bigoplus_{M\in \Irr H^*} \k e(M)$ and $\cen(H^*)^{\text{cl}} = \bigoplus_{M\in \Irr H^*} \cO e(M)$,
where we have put $\cO:= \{ \text{algebraic integers in $\k$} \}$.
Proposition~\ref{P:Hopf7}(c), with $H^*$ in place of $H$, gives
$\bb_\Lambda(e(M)) = (\dim_\k M)\chi_M$. Thus, 
\[
\bb_\Lambda(\cen(H^*)^{\text{cl}}) \subseteq \chi(G_0(H^*)) \cO \subseteq C(H) \ .
\]
Finally, all elements of $G_0(H^*)$ are integral over $\ZZ$, and hence the same holds for
the elements of $\chi(G_0(H^*)) \cO$. In particular, $\bb_{\Lambda}(\sS^*(\chi_V))$ 
is integral over $\ZZ$, as desired.
\end{proof}


\subsection{The class equation} \label{SS:classeqn} 

We now derive the celebrated class equation, due to Kac \cite[Theorem 2]{gK72}
and Zhu \cite[Theorem 1]{yZ94}, from Proposition~\ref{P:idempotents}. 
Frobenius' Theorem (Example~\ref{EX:grpalg}) in characteristic $0$ 
also follows from this result applied to $H = (\k G)^*$.
We also mentioned here that the
class equation was used by Schneider in \cite{hjS01} to prove the following strong
version of the Frobenius property for quasitriangular 
semisimple Hopf algebras $H$ over a field $\k$ of characteristic $0$:
if $V$ is an absolutely irreducible $H$-module, then $(\dim_\k V)^2$ divides $\dim_\k H$.

\begin{thm}[Class equation] \label{T:classeqn}
Let $H$ be a split semisimple Hopf algebra over a field $\k$
of characteristic $0$. Then 
$\dim_\k (H^* \otimes_{R(H)}M)$ divides $\dim_\k H^*$
for every absolutely irreducible $R(H)$-module  $M$.
\end{thm}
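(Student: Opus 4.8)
The plan is to reduce the divisibility to the statement that the $\beta$-index $[R(H):M]_\beta$ is a positive integer, and to exhibit that integer as the quotient $\dim_\k H^*/\dim_\k(H^*\otimes_{R(H)}M)$. First I would record the structural facts. Since $\ch\k=0$ and $H$ is split semisimple, $H$ is involutory and cosemisimple (as in the proof of Theorem~\ref{T:Zhu}), so $H^*$ is separable and involutory; thus $H^*$ is bi-symmetric by Lemma~\ref{L:sep}(c), and $R(H)=G_0(H)\otimes_\ZZ\k$ is a symmetric $*$-algebra which, by Corollary~\ref{C:positivity} applied to $G_0(H)$ over $\ZZ$ followed by base change to $\k$, is separable over $\k$. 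The dual bases $\{[V]\}$, $\{[V^*]\}$ for $\beta$ lie in $G_0(H)$, so $G_0(H)$ is a weak $\ZZ$-form of $(R(H),\beta)$ in the sense of Section~\ref{SS:integrality}. An absolutely irreducible $R(H)$-module $M$ is cyclic and satisfies $\End_{R(H)}(M)\cong\k$, so Propositions~\ref{P:idempotents} and \ref{P:idempotents2} furnish the central idempotent $e(M)=[R(H):M]_\beta^{-1}z(M)\in\cen(R(H))$ together with the block identification $e(M)R(H)\cong\End_\k(M)$.

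The technical core is the formula
\[
\dim_\k\bigl(H^*\otimes_{R(H)}M\bigr)=\bigl(\dim_\k H^*\bigr)\,[R(H):M]_\beta^{-1}.
\]
To prove it I would first use the block decomposition: as a left $R(H)$-module, $R(H)e(M)=e(M)R(H)\cong\End_\k(M)\cong M^{\oplus\dim_\k M}$, and $H^*\otimes_{R(H)}R(H)e(M)\cong H^*e(M)$, whence $\dim_\k(H^*e(M))=(\dim_\k M)\,\dim_\k(H^*\otimes_{R(H)}M)$. Next, for the symmetric algebra $H^*$ and the idempotent $e(M)$ one has $\dim_\k(H^*e(M))=\chi_{\reg}(e(M))$, where $\chi_{\reg}\in(H^*)^*=H$ is the regular character of $H^*$; by Lemma~\ref{L:sep} this is the cocommutative integral $\Lambda\in\Hint_H$ with $\langle\e,\Lambda\rangle=\dim_\k H$ used in the proof of Theorem~\ref{T:Zhu}. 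Writing $z(M)=\sum_V\chi_M([V])[V^*]$ gives
\[
\langle\Lambda,e(M)\rangle=[R(H):M]_\beta^{-1}\sum_{V}\chi_M([V])\,\chi_{V^*}(\Lambda).
\]
Because $\Lambda/\dim_\k H$ acts on every $H$-module as the projector onto invariants, $\chi_{V^*}(\Lambda)=(\dim_\k H)\dim_\k(V^*)^H$ vanishes for every non-trivial $V$ and equals $\dim_\k H^*$ for $V=\k$; since $[\k]=1_{R(H)}$ and $\chi_M(1)=\dim_\k M$, the sum collapses to $\langle\Lambda,z(M)\rangle=(\dim_\k M)\dim_\k H^*$. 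Dividing by $\dim_\k M$ yields the displayed formula.

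With the formula in hand the conclusion is immediate. Lemma~\ref{L:integrality}, applied with the weak $\ZZ$-form $G_0(H)$, shows that $[R(H):M]_\beta$ is integral over $\ZZ$, and the formula rewrites it as the ratio $\dim_\k H^*/\dim_\k(H^*\otimes_{R(H)}M)$. Since $R(H)$ is semisimple, the unital inclusion $R(H)\hookrightarrow H^*$ splits as a map of right $R(H)$-modules, so $M\cong R(H)\otimes_{R(H)}M$ is a direct summand of $H^*\otimes_{R(H)}M$; in particular the latter is nonzero and $[R(H):M]_\beta$ is a positive rational number. A positive rational number that is integral over $\ZZ$ is a positive integer, so $[R(H):M]_\beta\in\ZZ_{>0}$ and $\dim_\k(H^*\otimes_{R(H)}M)$ divides $\dim_\k H^*$ with quotient $[R(H):M]_\beta$.

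I expect the main obstacle to be the evaluation of the regular character of $H^*$ on $e(M)$: one must recognize $z(M)$ as a cocommutative element of $R(H)\subseteq H^*$ and exploit that pairing it against the integral $\Lambda$ annihilates every summand $[V^*]$ with $V$ non-trivial, leaving only the contribution $(\dim_\k M)\dim_\k H^*$. The remaining points — separability of $R(H)$, the splitting of $R(H)\hookrightarrow H^*$, and the passage from ``integral over $\ZZ$'' to ``integer'' via positivity of a dimension — are routine given the machinery already developed.
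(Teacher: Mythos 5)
Your proof is correct and follows essentially the same route as the paper's: both evaluate the integral $\Lambda\in\Hint_H$ (the regular character of $H^*$) on an idempotent of $R(H)$ attached to $M$, exploit that $\Lambda$ annihilates all non-trivial irreducible characters so that $\Lambda|_{R(H)}$ is $\dim_\k H^*$ times the trace form defining $\beta$, and conclude via Lemma~\ref{L:integrality} with $G_0(H)$ as the weak $\ZZ$-form. The only difference is cosmetic: the paper pairs $\Lambda$ against the primitive idempotent $e$ with $M\cong R(H)e$ and reads off $[R(H):M]_{\beta'}=\beta'(e,1)^{-1}$ directly from Proposition~\ref{P:idempotents}(a), whereas you pair it against the central idempotent $e(M)$ and divide out the extra factor $\dim_\k M$ coming from the block $e(M)R(H)\cong\End_\k(M)$ of Proposition~\ref{P:idempotents2}(a).
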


\begin{proof}
Inasmuch as $R(H)$ is semisimple by Corollary~\ref{C:positivity}, we have $M \cong R(H)e$ for some
idempotent $e=e^2\in R(H)$ with
$eR(H)e \cong \k$. Thus, $H^* \otimes_{R(H)}M \cong H^*e$ and the assertion of the theorem is equivalent
to the statement that $\dim_\k H^*e$ divides $\dim_\k H^*$.

The bilinear form $\beta$ in \eqref{E:G0beta} can be written as
$\beta([V],[W]) = \tau([V][W])$, where $\tau \colon G_0(H) \to \ZZ$ is the trace form given by 
\begin{equation*} \label{E:tauG0}
\tau([V]) = \dim_\k V^H \ .
\end{equation*}
Now let $\Lambda \in \Hint_H$ denote the regular character of the dual
Hopf algebra $H^*$, as in the proof of Theorem~\ref{T:Zhu}. Thus,
\begin{equation} \label{E:x}
\langle e,\Lambda \rangle \underset{\eqref{E:idempotentrank}}{=} 
\dim_\k eH^* \underset{\eqref{E:rightleft}}{=} \dim_\k H^*e \ .
\end{equation}
Being an integral of $H$, $\Lambda$ annihilates all $V \in \Irr H \setminus \{ \k_\e\}$ and so
$\langle \chi_V, \Lambda \rangle = 0$. On the other hand, 
$\langle \chi_{\k_\e}, \Lambda \rangle = \langle \e, \Lambda \rangle = \dim_\k H^*$. 
This shows that $\Lambda\big|_{R(H)} = \dim_\k H^* \cdot \tau'$, where we have put 
$\tau' = \tau \otimes_{\ZZ}\Id_\k \colon R(H) \to \k$.
Therefore, \eqref{E:x} becomes 
\begin{equation*} \label{E:x'}
\tau'(e) = \frac{\dim_\k H^*e}{\dim_\k H^*} \ .
\end{equation*}
Now, $\tau'(e) = \beta'(e,1)$, where $\beta' = \beta \otimes_{\ZZ}\Id_\k$, and by 
Proposition~\ref{P:idempotents}(a), we have 
$\beta'(e,1)^{-1} = [R(H):M]_{\beta'}$. 
Thus,
\begin{equation} \label{E:x'}
[R(H):M]_{\beta'} = \frac{\dim_\k H^*}{\dim_\k H^*e} = \frac{\dim_\k H^*}{\dim_\k (H^*\otimes_{R(H)}M)}\ .
\end{equation}
Finally, 
Lemma~\ref{L:integrality} with $A = G_0(H)$ and $A'= R(H)$
tells us that this rational number is integral over $\ZZ$. 
Hence it is an integer, proving the theorem. 
\end{proof}


\subsection{The adjoint class} \label{SS:adjoint}

\subsubsection{The adjoint representation} \label{SSS:ad}
The left adjoint representation of $H$ is given by
\begin{equation*}
\ad \colon H \tto \End_\k H\ , \quad \ad(h)(k) = \sum h_1 k \sS(h_2)
\end{equation*}
for $h, k \in H$. 
There is an $H$-isomorphism
\begin{equation} \label{E:adiso}
H_{\ad} \cong \bigoplus_{V \in \Irr H} V \otimes V^* \ .
\end{equation}
This follows from standard $H$-isomorphism $V \otimes V^* \cong \End_\k(V)$
(see Section~\ref{SS:Hopf6}) combined with the Artin-Wedderburn isomorphism, 
$H \cong \bigoplus_{V \in \Irr H} \End_\k(V)$, which is equivariant for the adjoint $H$-action
on $H$.

\subsubsection{The adjoint class} \label{SSS:adclass}
Equation~\eqref{E:adiso} gives the following description of the Casimir
element $z  = z_\beta$ of the symmetric $\ZZ$-algebra $G_0(H)$:
\begin{equation} \label{E:norm1}
z =  \sum_{V \in \Irr H} [V][V^*] = [H_{\ad}]   \in \cen(G_0(H)) \ .
\end{equation}
Therefore, we will refer to the Casimir
element $z$ as the \emph{adjoint class} of $H$. 

We now consider the left regular action of $z$ on $G_0(H)$, that is, 
the endomorphism $z_{G_0(H)} \in \End_{\ZZ}(G_0(H))$ that is given by
\[
z_{G_0(H)} \colon G_0(H) \to G_0(H)\ ,\quad x \mapsto z x \ .
\]
By Proposition~\ref{P:positivity}, we know that the eigenvalues of $z_{G_0(H)}$ are 
positive real algebraic integers and that the largest eigenvalue is $\dim(z)= \dim_\k H$.
The following proposition gives more precise information; the result was
obtained by Sommerh\"auser \cite[3.11]{yS98} using a different method.

\begin{prop} \label{P:adjoint}
Let $H$ be a split semisimple Hopf algebra over a field $\k$ of
characteristic $0$. Then 
the eigenvalues of $z_{G_0(H)}$ are positive integers $\le \dim_\k H$.
If $G_0(H)$ or $H$ is commutative then
all eigenvalues of $z_{G_0(H)}$ divide $\dim_\k H$.
\end{prop}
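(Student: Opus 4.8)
The plan is to identify the eigenvalues of $z_{G_0(H)}$ with the central-character values $\omega_M(z)$ of the (split semisimple) representation algebra, and then to pin those values down by combining the index formula of Proposition~\ref{P:idempotents}(c) with the class equation. I would begin by recalling from \eqref{E:norm1} that $z = [H_{\ad}]$ is the Casimir element of the symmetric $*$-algebra $G_0(H)$, so $z \in \cen(G_0(H))$. Proposition~\ref{P:positivity} then already tells me that the matrix of $z_{G_0(H)}$ in the basis $\{[V]\}$ is symmetric and positive definite, that its eigenvalues are positive reals integral over $\ZZ$, and that the largest of them equals $\dim(z) = \dim_\k H$. Since this matrix has integer entries, its eigenvalues are in addition algebraic integers. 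Hence every eigenvalue is a positive algebraic integer bounded above by $\dim_\k H$, and the whole problem reduces to showing that each eigenvalue is \emph{rational}.

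For this I would extend scalars to an algebraic closure $\overline{\k}$ and set $A = G_0(H)\otimes_\ZZ\overline{\k} = R(H)\otimes_\k\overline{\k}$, which is split semisimple by Corollary~\ref{C:positivity}. As $z$ is central, $z_A$ acts as a scalar on each simple block, so the set of eigenvalues of $z_{G_0(H)}$ is exactly $\{\omega_M(z)\}$, with $M$ ranging over the absolutely irreducible $R(H)$-modules. Feeding the split semisimple algebra $A$ into Proposition~\ref{P:idempotents}(c) and using $\rank_{\overline{\k}} e(M)A = (\dim_{\overline{\k}}M)^2$ from Proposition~\ref{P:idempotents2}(a), I obtain
\[
\omega_M(z) = [A:M]_\beta\cdot \dim_{\overline{\k}}M .
\]
The proof of the class equation (Theorem~\ref{T:classeqn}) identifies the index as the rational number $[A:M]_\beta = \dim_\k H / \dim_\k(H^*\otimes_{R(H)}M)$. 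Thus $\omega_M(z)$ is rational; being also an algebraic integer, it is a rational integer, and it is positive and $\le \dim_\k H$ by the preceding paragraph. This settles the first assertion.

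For the refinement I would split into the two stated cases. If $G_0(H)$---equivalently $R(H)$, equivalently $A$---is commutative, then every $M$ is one-dimensional, so $\dim_{\overline{\k}}M = 1$ and $\omega_M(z) = [A:M]_\beta = \dim_\k H/\dim_\k(H^*\otimes_{R(H)}M)$; since the class equation says the denominator divides $\dim_\k H$, so does the quotient $\omega_M(z)$. If instead $H$ is commutative, then $H$ is split semisimple and commutative, whence every $V \in \Irr H$ is one-dimensional and $V\otimes V^*\cong\k_\e$; consequently $z = \sum_{V\in\Irr H}[V][V^*] = (\dim_\k H)\cdot 1$, so that $z_{G_0(H)} = (\dim_\k H)\Id$ has $\dim_\k H$ as its only eigenvalue. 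In both cases every eigenvalue divides $\dim_\k H$.

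I expect the only substantive obstacle to be the rationality step: a priori the values $\omega_M(z)$ are merely algebraic integers, and what forces them into $\ZZ$ is the explicit rational expression for the index $[A:M]_\beta$ supplied by the class equation. Once rationality is in hand, the bound and positivity are inherited from Proposition~\ref{P:positivity}, and the divisibility in the commutative case is automatic, since $\dim_{\overline{\k}}M = 1$ collapses the factor $\dim_{\overline{\k}}M$ and leaves the index $\dim_\k H/\dim_\k(H^*\otimes_{R(H)}M)$, a divisor of $\dim_\k H$.
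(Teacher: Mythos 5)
Your proof is correct and follows essentially the same route as the paper: identify the eigenvalues with the central character values $\omega_M(z)$ over the split semisimple algebra $R(H)\otimes\overline{\k}$, compute $\omega_M(z)=\dim_{\overline{\k}}M\cdot[A:M]_\beta$ via Propositions~\ref{P:idempotents}(c) and \ref{P:idempotents2}(a), and invoke the class equation for integrality and divisibility. The only (harmless) deviations are cosmetic: you get the bound $\omega_M(z)\le\dim_\k H$ from the Frobenius--Perron statement in Proposition~\ref{P:positivity}(b) where the paper uses $M\subseteq H^*\otimes_{R(H)}M$, and your ``rational plus algebraic integer'' step is slightly roundabout since the class equation already makes $[A:M]_\beta$ itself an integer.
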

 
\begin{proof}
We may pass to the algebraic closure of $\k$; this changes neither $G_0(H)$ nor $z$.
Then the representation algebra
$R(H) = G_0(H)\otimes_{\ZZ}\k$ is split semisimple by Corollary~\ref{C:positivity}. 
Since $z  \in \cen(R(H))$, the eigenvalues of $z_{G_0(H)}$ are exactly the $\omega_M(z) \in \k$, 
where $M$ runs over the irreducible $R(H)$-modules and $\omega_M$ denotes the
central character of $M$ as in \ref{SS:central}. 
We know by Propositions~\ref{P:idempotents}(c) and \ref{P:idempotents2}(a) and
equation \eqref{E:x'} that
\begin{equation*} \label{E:norm2}
\omega_M(z) = \dim_\k M \cdot \frac{\dim_\k H^*}{\dim_\k (H^*\otimes_{R(H)}M)} \ ,
\end{equation*}
and this is a positive integer by Theorem~\ref{T:classeqn}.  Since $M \subseteq H^*\otimes_{R(H)}M$, we have 
$\omega_M(z) \le \dim_\k H$. If $G_0(H)$ is commutative then
$\dim_\k M=1$, and hence $\omega_M(z)$ divides $\dim_\k H$. If $H$ is commutative then $R(H) = H^*$,
and so $\omega_M(z) = \dim_\k H$. (Alternatively, if $H$ is commutative then $H_{\ad} \cong \k_\e^{\dim_\k H}$
and $z = [H_{\ad}]= (\dim_\k H)1$\,.)
\end{proof}

We mention that the Grothendieck ring $G_0(H)$ is commutative whenever the Hopf algebra
$H$ is almost commutative. In particular, this holds for all quasi-triangular
Hopf algebras; see Montgomery \cite[Section~10.1]{sM93}. 

\begin{example} \label{EX:grpalg2}
Let $H = \k G$ be the group algebra of the finite group $G$ over a splitting field $\k$ of characteristic $0$. 
The representation algebra 
$R(H) \cong \bigoplus_{V \in \Irr H} \k \chi_V \subseteq H^*$ is isomorphic to the algebra of 
$\k$-valued class functions on $G$, that is, functions $G \to \k$ that are constant on
conjugacy classes of $G$. For any finite-dimensional $\k G$-module $V$, 
the character values $\chi_V(g)$ $(g \in G)$ are the 
eigenvalues of the endomorphisms $[V]_{G_0(H)} \in \End_{\ZZ}(G_0(H))$. 
Specializing to 
the adjoint representation $V = H_{\ad}$ we obtain the eigenvalues of $z_{G_0(H)}$:
they are the integers
$\chi_{H_{\ad}}(g) = |C_G(g)|$ with $g \in G$.
\end{example}


\subsection{The semisimple locus of $G_0(H)$} \label{SS:semisimple}

Let $H$ be a split semisimple Hopf algebra over a field $\k$. 
Recall that $G_0(H)\otimes_{\ZZ}\QQ$ is semisimple by Corollary~\ref{C:positivity}. 
We will now describe the primes $p$ for which  the algebra
$G_0(H)\otimes_{\ZZ} \FF_p$ is semisimple. 

\begin{prop} \label{P:semisimple}
Let $H$ be a split semisimple Hopf algebra over a field $\k$. Then:
\begin{enumerate}
\item If $p$ divides $\dim_\k H$ then $G_0(H)\otimes_{\ZZ} \FF_p$ is not semisimple. 
\item Assume that $\ch \k =0$. Then $G_0(H)\otimes_{\ZZ} \FF_p$ is semisimple for all $p > \dim_\k H$. 
\item Assume that $\ch \k =0$ and that $G_0(H)$ or $H$ is commutative. 
Then $G_0(H)\otimes_{\ZZ} \FF_p$ is semisimple
if and only if $p$ does not divide $\dim_\k H$. 
\end{enumerate}
\end{prop}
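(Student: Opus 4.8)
The plan is to deduce all three statements from the separability locus computed in Proposition~\ref{P:separability}, applied to the symmetric $\ZZ$-algebra $A = G_0(H)$ with Casimir element $z = [H_{\ad}]$ of \eqref{E:norm1}. The bridge to the semisimplicity asserted here is that $\FF_p$ is perfect, so separability and semisimplicity coincide for the finite-dimensional $\FF_p$-algebra $G_0(H)\otimes_\ZZ\FF_p$. Thus Proposition~\ref{P:separability}, taken with $R = \ZZ$ and $\fp = (p)$, says that $G_0(H)\otimes_\ZZ\FF_p$ is semisimple if and only if $(p) \nsupseteq c(A)\cap\ZZ$. I would write $c(A)\cap\ZZ = (N_0)$ with $N_0 > 0$; this ideal is nonzero by (the proof of) Corollary~\ref{C:positivity}. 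Everything then reduces to deciding which primes $p$ divide $N_0$.

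The first step is to sandwich $N_0$ between $\dim_\k H$ and $\det(z_A)$. Using the dimension augmentation $\dim$ on $G_0(H)$, for which $z = [H_{\ad}]$ satisfies $\dim(z) = \dim_\k H$, equations \eqref{E:augmentation2} and \eqref{E:augmentation3} give
\[
c(A)\cap\ZZ \subseteq (\dim_\k H)\,,
\]
so that $\dim_\k H \mid N_0$. On the other hand, $z = c(1) \in c(A)$, so by Section~\ref{SS:norms} its norm satisfies $N(z) = \det(z_A) \in N(c(A)) \subseteq c(A)\cap\ZZ = (N_0)$, whence $N_0 \mid \det(z_A)$. Combining these,
\[
\dim_\k H \mid N_0 \mid \det(z_A) = N(z)\,.
\]

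With this chain the three parts follow by arithmetic. For (a), if $p \mid \dim_\k H$ then $p \mid N_0$, so $(p) \supseteq c(A)\cap\ZZ$ and $G_0(H)\otimes_\ZZ\FF_p$ is not separable, hence not semisimple; note that this requires no hypothesis on $\ch\k$. For (b) and (c) I would invoke Proposition~\ref{P:adjoint}, which in characteristic $0$ describes the eigenvalues of $z_{G_0(H)} = z_A$. Since $\det(z_A)$ is the product of these eigenvalues, a prime divides $\det(z_A)$ exactly when it divides one of them. For (b) the eigenvalues are positive integers $\le \dim_\k H$, so every prime factor of $\det(z_A)$ is $\le \dim_\k H$; thus $p > \dim_\k H$ forces $p \nmid \det(z_A)$, hence $p \nmid N_0$, and $G_0(H)\otimes_\ZZ\FF_p$ is separable, i.e.\ semisimple. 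For (c), under the commutativity hypothesis every eigenvalue divides $\dim_\k H$, so every prime factor of $\det(z_A)$ divides $\dim_\k H$; hence $p \nmid \dim_\k H$ implies $p \nmid N_0$ and semisimplicity, and together with part (a) this yields the stated equivalence.

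The two divisibilities and the elementary fact that a prime dividing a product of positive integers divides one of the factors are routine. The one point that must be handled with care is the passage from Proposition~\ref{P:separability}, phrased in terms of separability, to the semisimplicity asserted here; this is exactly where perfectness of $\FF_p$ is used, so that separable and semisimple agree for $G_0(H)\otimes_\ZZ\FF_p$. Beyond that, I expect no substantial obstacle apart from invoking Proposition~\ref{P:adjoint} in the correct variant (eigenvalues $\le \dim_\k H$ in general, and dividing $\dim_\k H$ under the commutativity assumption).
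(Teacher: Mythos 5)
Your proposal is correct and follows essentially the same route as the paper: reduce semisimplicity over $\FF_p$ to separability, apply Proposition~\ref{P:separability} to the symmetric $\ZZ$-algebra $G_0(H)$, bound $c(G_0(H))\cap\ZZ$ above by $(\dim_\k H)$ via the dimension augmentation for (a), and below via the norm $N(z)=\det z_{G_0(H)}$ together with the eigenvalue information of Proposition~\ref{P:adjoint} for (b) and (c). The only cosmetic difference is that you phrase the ideal containments as divisibility of a generator $N_0$, while the paper works directly with the ideals; the substance is identical.
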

 
\begin{proof}
Semisimplicity
is equivalent to separability over $\FF_p$; see \cite[10.7 Corollary b]{rP82}. 
Therefore, we may apply Proposition~\ref{P:separability}. 
In detail,
consider the Casimir operator that is associated with the bilinear form $\beta$ of
Section~\ref{SSS:G0symm},
\begin{equation*} \label{E:GI}
c \colon G_0(H) \to \cen(G_0(H))\ ,\quad x \mapsto \sum_{V \in \Irr H} [V^*] x [V] \ .
\end{equation*}
By Proposition~\ref{P:separability}, $G_0(H)\otimes_{\ZZ} \FF_p$ is semisimple if
and only if $(p) \nsupseteq \ZZ \cap \Im c$.

(a) Consider the dimension augmentation $\dim \colon G_0(H) \to \ZZ$, $[V] \mapsto
\dim_\k V$. The composite $\dim\circ c$ is equal to $\dim_\k H \cdot \dim$\,.
Hence, $\ZZ \cap \Im c \subseteq \Im(\dim\circ c) \subseteq (\dim_\k H)$ holds in $\ZZ$, 
which implies (a).

(b) In view of Proposition~\ref{P:adjoint}, our hypothesis on $p$ implies that the norm
$N(z) = \det z_{G_0(H)}$ is not divisible by $p$. Since $z = c(1)$, it follows that 
$(p) \nsupseteq N(\Im c)$, and hence $(p) \nsupseteq \ZZ \cap \Im c$; see Section~\ref{SS:norms}.

(c) Necessity of the condition on $p$ follows from (a) and sufficiency follows from Proposition~\ref{P:adjoint}
as in (b).
\end{proof}


\subsection{Traces of group-like elements} \label{SS:group-like}

Let $H$ be a split semisimple Hopf algebra over a field $\k$ and let $\chi_{\ad} \in R(H) \subseteq H^*$
denote the character of the adjoint representation. Equation~\eqref{E:norm1} gives
\[
\chi_{\ad} = \sum_{V \in \Irr H} \chi_{V^*}\chi_V \ .
\]

\begin{prop} \label{P:group-like}
Let $H$ be a split semisimple Hopf algebra over a field $\k$. If $R(H)$ is semisimple then
$\chi_{\ad}(g) \neq 0$ for every group-like element $g \in H$.
\end{prop}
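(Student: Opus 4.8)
The plan is to recognize $\chi_{\ad}$ as the image under the character map of the \emph{adjoint class} $z = [H_{\ad}] = \sum_{V\in\Irr H}[V][V^*]$, which by \eqref{E:norm1} is exactly the Casimir element $z = z_\beta \in \cen(G_0(H))$ of the symmetric $\ZZ$-algebra $G_0(H)$. The first step is to interpret evaluation at the group-like element. Since $\Delta g = g\otimes g$, evaluation at $g$ is multiplicative on trace forms, so it restricts to an algebra homomorphism $\omega_g \colon R(H) \to \k$, $\chi_V \mapsto \chi_V(g)$; it is nonzero because $\omega_g(\chi_{\k_\e}) = \langle\e,g\rangle = 1$. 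Thus $\k_g := \k$ becomes a $1$-dimensional $R(H)$-module, and because $z$ is central, $\omega_g$ is simultaneously the central character of $\k_g$ in the sense of \ref{SS:central}. Under this identification the quantity to be controlled is $\chi_{\ad}(g) = \omega_g(z)$, so the whole problem reduces to showing $\omega_g(z) \neq 0$.

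Next I would invoke Proposition~\ref{P:idempotents} for $A = R(H)$ and $M = \k_g$. This is legitimate: $R(H) = G_0(H)\otimes_\ZZ\k$ is symmetric (base change of the symmetric form $\beta$ of \eqref{E:G0beta}), and since $R(H)$ is semisimple by hypothesis, the simple module $\k_g$ is cyclic projective, of the form $\k_g \cong R(H)e$ for an idempotent $e$, with $\End_{R(H)}(\k_g) \cong \k$ (it is $1$-dimensional, hence absolutely simple). Part~(c) of Proposition~\ref{P:idempotents} then yields
\[
\omega_g(z)\cdot\rank_\k \k_g = [R(H):\k_g]_\beta \cdot \rank_\k\bigl(e(\k_g)R(H)\bigr).
\]
Here $\rank_\k \k_g = 1$. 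The central idempotent $e(\k_g)$ from Proposition~\ref{P:idempotents}(b) cuts out the Wedderburn block of $\k_g$; as this block is a simple $\k$-algebra admitting a $1$-dimensional module, it must be $e(\k_g)R(H) \cong \End_\k(\k_g) \cong \k$, whence $\rank_\k\bigl(e(\k_g)R(H)\bigr) = 1$. Consequently $\omega_g(z) = [R(H):\k_g]_\beta$, which is invertible in $\k$ by Proposition~\ref{P:idempotents}(a) (its inverse is $\beta(e,1)$), and in particular nonzero. This gives $\chi_{\ad}(g) = \omega_g(z) \neq 0$.

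The step I expect to be the crux is controlling the rank factor $\rank_\k\bigl(e(\k_g)R(H)\bigr)$, and this is precisely where the group-like hypothesis does its work: it forces $\k_g$ to be $1$-dimensional, so the associated block has $\k$-dimension $1$, a nonzero scalar in any characteristic. For a higher-dimensional block the corresponding rank would be a square (times a division-algebra degree) and could vanish modulo $p$, so the argument would break down. It is worth noting that in characteristic $0$ one has the transparent alternative $\chi_{\ad}(g) = \sum_{V\in\Irr H}\chi_V(g)\chi_V(g^{-1}) = \sum_{V}|\chi_V(g)|^2 \geq 1$, using $\chi_{V^*} = \chi_V\circ\sS$ and $\sS(g)=g^{-1}$; but this positivity has no analogue in positive characteristic, which is exactly why the semisimplicity of $R(H)$ must be exploited through Proposition~\ref{P:idempotents} rather than through a counting or positivity estimate.
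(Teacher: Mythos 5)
Your proof is correct, but it takes a genuinely different route from the paper's. The paper argues directly from Proposition~\ref{P:separability}: semisimplicity of $R(H)$ means the Casimir operator $\chi \mapsto \sum_V \chi_{V^*}\chi\chi_V$ hits $1$, and evaluating the resulting identity $1=\sum_V \chi_{V^*}\chi\chi_V$ at $g$ (legitimate because evaluation at a group-like element is an algebra map on $H^*$) yields $1=\chi(g)\,\chi_{\ad}(g)$ in one line. You instead package evaluation at $g$ as a one-dimensional module $\k_g$ over $R(H)$ and run Proposition~\ref{P:idempotents}: semisimplicity makes $\k_g$ cyclic projective with $\End_{R(H)}(\k_g)\cong\k$, the block $e(\k_g)R(H)$ of a one-dimensional module has rank $1$, and part~(c) collapses to $\chi_{\ad}(g)=\omega_g(z)=[R(H):\k_g]_{\beta'}$, which part~(a) makes invertible with inverse $\beta'(e,1)$. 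Both arguments hinge on the same two inputs --- multiplicativity of evaluation at $g$ and semisimplicity of $R(H)$ --- and each produces an explicit inverse for $\chi_{\ad}(g)$: the paper's is $\chi(g)$ for any $\chi$ with $c(\chi)=1$, yours is $\beta'(e,1)$. What the paper's version buys is brevity; what yours buys is the sharper identification of $\chi_{\ad}(g)$ as the Schur element of $\k_g$, and it leans only on projectivity of $\k_g$ rather than on Higman's separability criterion. One streamlining worth noting: since $\k_g$ is one-dimensional, $\omega_g$ extends the central character to all of $R(H)$, so from \eqref{E:characters} and \eqref{E:index} one gets $[R(H):\k_g]_{\beta'}=\omega_g\bigl(\sum_i\omega_g(x_i)y_i\bigr)=\omega_g\bigl(\sum_i x_iy_i\bigr)=\omega_g(z)$ directly, with no need to compute the rank of the block $e(\k_g)R(H)$ at all; invertibility from Proposition~\ref{P:idempotents}(a) then finishes immediately.
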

 
\begin{proof}
By Proposition~\ref{P:separability},
semisimplicity of  $R(H)$ is equivalent to surjectivity
of the Casimir operator
$c  \colon R(H) \to \cen(R(H))$, $\chi \mapsto \sum_{V \in \Irr H} \chi_{V^*} \chi \chi_V$.
Fixing $\chi$ with $\sum_V \chi_{V^*} \chi \chi_V = 1$ we obtain
\[
1 = \sum_V \chi_{V^*}(g) \chi(g) \chi_V(g) =  \chi(g) \chi_{\ad}(g) \ ,
\]
which shows that $\chi_{\ad}(g) \neq 0$.
\end{proof}









\bibliographystyle{amsplain}
\bibliography{../bibliography}


\end{document}